\journal{{\tt arXiv.org}}
\definecolor{sepia}         {cmyk}{0   , 0.83, 1   , 0.70}
\definecolor{maroon}        {cmyk}{0   , 0.87, 0.68, 0.32}
\makeatletter\@addtoreset{equation}{section}\makeatother
\pgfplotsset{compat=1.12}
\newcommand{\externaltikz}[2]{\includegraphics{Externals/#1}}
\newtheorem{thm}{Theorem}
\newtheorem{example}{Example}	
\newtheorem{remark}{Remark}	
\newtheorem{definition}{Definition}	
\newcounter{tikzsubfigcounter}[figure]
\renewcommand{\thetikzsubfigcounter}{\the\numexpr\value{figure}+1\relax\alph{tikzsubfigcounter}}
\newcounter{tikzsubfigcounterinvisible}[figure]
\renewcommand{\thetikzsubfigcounterinvisible}{\the\numexpr\value{figure}+1\relax\alph{tikzsubfigcounterinvisible}}
\newcommand{\settikzlabel}[1]{ %
\refstepcounter{tikzsubfigcounterinvisible} \label{#1} 
}
\newlength{\figureheight}
\newlength{\figurewidth}
\newlength{\figureheightsave}
\newlength{\figurewidthsave}
\newcommand{\withfiguresize}[3]{%
	\setlength{\figureheightsave}{\figureheight}%
	\setlength{\figurewidthsave}{\figurewidth}%
	\setlength{\figureheight}{#1}%
	\setlength{\figurewidth}{#2}%
	#3%
	\setlength{\figureheight}{\figureheightsave}%
	\setlength{\figurewidth}{\figurewidthsave}%
}
\newcommand{\Sbb}{\mathbb{S}}
\newcommand{\tr}{\mbox{tr}}
\DeclareSymbolFont{msbm}{U}{msb}{m}{n}
\DeclareMathSymbol{\N}{\mathalpha}{msbm}{'116}
\DeclareMathSymbol{\R}{\mathalpha}{msbm}{'122}
\newcommand{\de}{\partial}
\newcommand{\dt}{\de_t}
\newcommand{\norm}[1]{\ensuremath{\Vert #1 \Vert}}
\newcommand{\US}{\mathbb{S}^2}
\newcommand{\intV}[1]{\ensuremath{\langle #1 \rangle}}
\newcommand{\pfrac}[2]{\ensuremath{\frac{\partial #1}{\partial #2}}}
\newcommand{\trans}[1][]{^{#1\top}}
\newcommand{\relerr}[2]{\ensuremath{e_{rel}(#1,#2)}}
\newcommand{\dx}{\partial_{x}}
\newcommand{\Flux}{\ensuremath{\bF}}
\newcommand{\Source}{\ensuremath{\bs}}
\newcommand{\PD}{\ensuremath{f}} 
\newcommand{\FD}{\ensuremath{F}} 
\newcommand{\VF}{\ensuremath{Q}} 
\newcommand{\basisf}{\pmb{a}}
\newcommand{\momf}{\pmb{u}}
\newcommand{\moments}[1][ ]{\ensuremath{\bu_{#1}}} 
\newcommand{\mpl}{\pmb{\alpha}}
\newcommand{\mplf}{\mpl}
\newcommand{\normal}[1]{\ensuremath{\hat{#1}}}
\newcommand{\figref}[1]{Figure~\ref{#1}}
\newcommand{\tabref}[1]{Table~\ref{#1}}
\newcommand{\trace}[1]{\ensuremath{\operatorname{trace}\left(#1\right)}}
\newcommand{\timevar}{\ensuremath{t}} 
\newcommand{\timevarref}{\ensuremath{\xi}} 
\newcommand{\dtstepsize}{\ensuremath{\Delta \timevar}}
\newcommand{\dxstepsize}{\ensuremath{\Delta x}}
\newcommand{\timeind}{\ensuremath{\kappa}}
\newcommand{\timepoint}[1]{\ensuremath{\timevar_{#1}}}
\newcommand{\cellind}{\ensuremath{j}}
\newcommand{\cellmean}[2][\cellind]{\ensuremath{\overline{#2}_{#1}}}
\newcommand{\momentsprojected}{\ensuremath{\moments[h]}}
\newcommand{\momentspv}[1]{\ensuremath{\moments[#1]}}
\newcommand{\momentscellmean}[1]{\ensuremath{\cellmean[#1]{\moments}}}
\newcommand{\momentslocal}[1]{\moments[#1]}
\newcommand{\momentslocallimited}[2][\limitervariable]{\moments[#2]^{#1}}
\newcommand{\Testfunction}[1][ ]{\ensuremath{v_{#1}}}
\newcommand{\numericalFlux}{\ensuremath{\widehat{\bF}}}
\newcommand{\viscosityconstant}{\ensuremath{C}}
\newcommand{\RKstages}{\ensuremath{s}}
\DeclareFontFamily{U}{tipa}{}
\DeclareFontShape{U}{tipa}{m}{n}{<->tipa10}{}
\newcommand{\arc@char}{{\usefont{U}{tipa}{m}{n}\symbol{62}}}%
\newcommand{\arc}[1]{\mathpalette\arc@arc{#1}}
\newcommand{\arc@arc}[2]{%
  \sbox0{$\m@th#1#2$}%
  \vbox{
    \hbox{\resizebox{\wd0}{\height}{\arc@char}}
    \nointerlineskip
    \box0
  }%
}
\begin{document}
	
	\markboth{G. Corbin, A. Hunt,  F. Schneider, A. Klar, C. Surulescu}{Glioma}
	
	
	\title{Higher-order models for glioma invasion: from a two-scale description to effective equations for mass density and momentum
		\thanks{This work was financially supported by BMBF in the project GlioMaTh.}}

	\author{G. Corbin, A. Hunt, A. Klar, F. Schneider, C. Surulescu}
	
	\address{Department of Mathematics, University of Kaiserslautern, \\ 
		P.O. Box 3049, 67653 Kaiserslautern, Germany\\
		{\it \{corbin,hunt,klar,schneider,surulescu\}@mathematik.uni-kl.de}}
	
	
	\begin{abstract}
		Starting from a two-scale description involving receptor binding dynamics and a kinetic transport equation for the evolution of the cell density function under velocity reorientations, 
		we deduce macroscopic models for glioma invasion featuring partial differential equations for the mass density and momentum of a population of glioma cells migrating through the anisotropic 
		brain tissue. The proposed first and higher order moment closure methods enable numerical simulations of the kinetic equation. Their performance is then compared to that of the  
		diffusion limit.  The approach allows for DTI-based, patient-specific predictions of the tumor extent and its dynamic behavior. 
	\end{abstract}
	\begin{keyword}
		Multiscale model, glioma invasion, kinetic transport equation, diffusion tensor imaging, macroscopic scaling, moment closure, reaction-diffusion-transport equations
	\end{keyword}
	
	\maketitle
	

%

\noindent


\section{Introduction}

\noindent
Malignant glioma make up about half of all primary brain tumors in adults. These rapidly growing tumors invade adjacent regions of the brain tissue and occur in all age groups, with a higher 
frequency in late adulthood. An exhaustive microscopic resection of the neoplasm is in general impossible due to their high proliferation rate and diffuse infiltration. This leads to substantial clinical 
challenges and high mortality of the affected patients. For glioblastoma multiforme (GBM), the most frequent and most aggressive type of these tumors, the median survival time amounts to 60 weeks 
in spite of the most advanced treatment methods, involving resection, radio- and chemotherapy \cite{wrenschetal2002}.

\noindent
The available non-invasive medical imaging techniques, including MRI and CT, only allow a macroscopic classification of the active and necrotic tumor areas and of the surrounding edema. Thus, the microscopic 
extent of the tumor cannot be visualized. Tumor growth, development, and invasion is, however, a complex multiscale phenomenon in which the macroscopic evolution of the tumor is regulated by processes on lower  
scales at the cellular and subcellular levels \cite{hanahan2011}. Particularly the issue of cell invasion into the tissue is of utmost relevance in the context of assessing the tumor margins. The latter 
are most 
often very diffuse and irregular \cite{coons,daumas-duportetal,gerstneretal2010} due to the highly infiltrative migration of glioma cells, which are believed to follow white matter tracts, thus using the 
anisotropy of brain tissue \cite{claes2007,d'abaco-kaye07,giese-etal96,giesewestphal1996}. As both the brain structure and the tumor evolution are patient-specific, a personalized approach to 
diagnosis and therapy is necessary. While surgical resection has to orient itself on the macroscopic tumor and chemotherapy is mainly systemic, the radiotherapeutic 
approach can greatly benefit from individual predictions of the macroscopic tumor extent based on microscopic information about brain tissue structure and therewith conditioned cell invasion. 
Mathematical models can provide a valuable tool for including such detailed information in the process, leading to enhanced forecasts of the tumor margins and thus to improvement of therapy planning.

\noindent
Existing mathematical models for glioma growth and invasion can be assigned to several classes: Discrete settings follow the evolution of individual cells on a 
lattice \cite{boettger-etal-12,hatzikirou-etal-12}, while hybrid models (see e.g. \cite{kim-roh-2013,tanakaetal2009}) combine discrete descriptions with continuous differential equations for densities, concentrations, 
 and/or volume fractions controlling the cell behavior. These approaches have the advantage of being able to include a high level of detail in the characterization of cell motions, but they also require a high computational 
effort and involve many parameters. Continuous approaches involve systems of various types of partial differential equations, sometimes coupled to ODEs, and allow to describe the dynamics of cells 
interacting with their surroundings. They are less detailed than their (semi)discrete counterparts, but are able to capture the essential features of the modeled processes, and are better suited for efficient 
numerical simulations. Most of them are directly set on the macroscopic scale, on which the tumor is observed, and involve reaction-diffusion equations for the density of glioma cells. Thereby, the influence 
of the anisotropic brain structure is included in the diffusion coefficients, which are assumed to be proportional to the water diffusion tensor assessed by diffusion tensor imaging (DTI) 
\cite{Jbabdi,Konukoglu2010,wang09}. More recent models involve phase field approaches, where the tumor cells, the healthy tissue, and nutrients are seen as phases interacting with each 
other \cite{colombo-etal}. Yet another approach \cite{PH13} uses a parabolic scaling to deduce the macroscopic description of the tumor density from kinetic transport equations for the cell density function depending on time, position, velocity. That approach has been further extended in \cite{EHS,EHKS14,EKS16,HS17} to 
include phenomena on the subcellular scale, by introducing so-called cell activity variables (see \cite{BBNS10a}) to the variable space. On the macrolevel, this leads to a 
reaction-diffusion-taxis equation for the tumor density. The new haptotaxis-like term arises from the cell receptor binding to 
the surrounding tissue. The resulting models are able to reproduce the fingering patterns of glioma mentioned in the medical literature. 

\noindent
In this paper we aim at providing a new perspective on the DTI-based, two-scale description of glioma evolution in \cite{EHS,EHKS14}, with a focus on some moment closure techniques 
allowing 
to numerically handle the mesoscopic kinetic transport equation. In the kinetic context, the distribution function for the tumor cells is 
a mesoscopic quantity depending not only on time and position, but also on the cell velocity and the activity variables mentioned above. Among other methods, these dependencies can be discretized by moment closures \cite{BruHol01,Schneider2014,Schneider2015c,Ritter2016,Chidyagwai2017,Klar2014,Lev84}, which transform the scalar, but high-dimensional transport equation into a hyperbolic system for moments of the cell density with respect to the velocity and activity variables. 

\noindent
The paper is structured as follows: Section \ref{section1} introduces the two-scale model and the corresponding kinetic equation on the mesoscopic level, which constitutes the starting point of 
the subsequent study. The model is further specified by an adequate choice of the turning kernels to describe cell reorientations in response to the interactions with the surrounding 
tissue; the latter including the DTI brain data. As the focus is on the study of the moment models, no proliferation or decay terms are considered. A parabolic scaling 
leads to the effective macroscopic equation for the tumor cell density, featuring myopic diffusion and the mentioned haptotaxis-like term. Sections \ref{1st-order-closure} and \ref{higer-order-closure} are 
dedicated to the derivation of first and higher order moment closures, respectively. The numerical simulations are presented in Section \ref{sec:Results}, which also provides a comparison between the 
performance of the different approaches considered in this work. Section \ref{sec:Remarks}, containing the discussion of the results, is followed by the Appendix, giving some details about the numerical schemes 
and their implementation along with a proof of the hyperbolicity of the obtained Kershaw moment system.

\section{The kinetic glioma model}\label{section1}
\subsection{From a two-scale description to the mesoscopic kinetic equation}
\label{section1a}
In this section we describe the kinetic system modeling glioma invasion developed in \cite{EHKS14,PH13}, from which we derive an approximating scalar kinetic equation. Let $x\in \R^n$, $t\in \R^+$, and $v \in V = c\,\Sbb^{n-1}$ denote the mechanical variables, i.e. a position vector, the time variable, and a velocity vector, respectively. Thereby, the speed of the cells is assumed constant.\\[1ex]
\noindent 
 Further, let $y$ represent the volume fraction of bound receptors on the cell membrane. Its dynamics are characterized via mass action kinetics of binding and detachment of receptors to the soluble 
 and unsoluble components of the cell environment. Here we consider only the latter kind of bindings, i.e. to the ligands on the extracellular matrix fibers, as we are primarily interested in the influence 
 of the specific individual brain structure on the glioma invasion. Translating the receptor binding kinetics into a differential equation leads to
 \begin{equation}\label{eq-dynint}
  \dot y=-(k^+\VF+k^-)y+k^+\VF,
 \end{equation}
where $\VF(t,x)$ represents the macroscopic volume fraction of tissue fibers, while $k^+$ and $k^-$ are positive constants denoting the binding and the detachment rate, respectively. Note that $y$ is a unitless 
quantity in the interval $(0,1)$ and, as in \cite{EHKS14}, we assume the total number of receptors to be conserved. The variable $y$ can be interpreted as a so-called biological variable, also referred to as an activity 
variable in the KTAP (kinetic theory of active particles) framework introduced by Bellomo et al. (see e.g., \cite{BBNS10a}). \\[1ex]
\noindent
Compared to cell motion, the reversible receptor binding to the extracellular matrix (ECM) fibers is a very fast process. Therefore, the corresponding dynamic equilibrates rapidly at its steady-state, which is uniquely given by 
$$y^\star=\frac{k^+\VF}{k^+\VF+k^-} := g(\VF),$$ 
for \eqref{eq-dynint}.
For the subsequent analysis we will only consider deviations from the steady-state, which will be small quantities $z:=y^*-y$ in the set $Z\subseteq (y^*-1,y^*)$. As in \cite{EHKS14}, 
consider the path of a single cell starting in $x_0$ and moving with velocity $v$ through the time-invariant density field $\VF(x)$. Then, with $x=x_0+vt$, we obtain that $z$ satisfies the equation 
\begin{equation*}
 \frac{dz}{dt}=-(k^+\VF(x)+k^-)z+ g'(\VF(x))v\cdot \nabla \VF(x).
\end{equation*}
With the notations $\alpha (\VF(x)):=k^+\VF(x)+k^-$ and $\beta (\VF(x)):=g'(\VF(x))\nabla \VF(x)$ this takes the form
\begin{equation*}
 \frac{dz}{dt}=-\alpha (\VF(x))z+\beta (\VF(x))\cdot v.
\end{equation*}
We use the mesoscopic cell density function $p(t,x,v,z)$ to describe the dynamics of glioma cells, and a velocity jump model whose integral operator models the velocity innovations in response to the 
tissue structure. The corresponding kinetic transport equation is written as follows (see \cite{EHKS14,EHS}):
\begin{equation}\label{pdeinitial}
 \partial_t p  + v \cdot \nabla_x p + \partial_z \left( \left( - \alpha z + v \cdot \beta \right) p \right)
= - \lambda (z)\left(p(v)- \int_V K(x,v,v^\prime)p(v^\prime) d v^\prime \right),
\end{equation}
on the domain
\begin{align*}
	\Omega_T \times \Omega_X \times V \times Z  \subseteq (0,T) \times [0,X]^n \times c\,\US \times [y^\ast-1,y^\ast],
\end{align*}
where $K(x,v,v')$ is the turning kernel carrying the tissue influence and $\lambda (z)$ is the cell turning rate. We assume that the kernel has the form $K(x,v,v')= \FD(x,v)$, where $\FD(x,v)$ represents the normalized directional distribution of tissue fibers \cite{EHKS14,EHS}, 
 i.e. $\int_{V} \FD(x,v)dv=1$. 
 For the turning rate we take 
$$\lambda (z):=\lambda_0 - \lambda_1 z$$
such that $\lambda _0,\lambda _1>0$ are constants. This choice corresponds to a turning rate increasing with the amount of receptors bound to the ECM, see \cite{EHKS14}.\\[1ex]
\noindent
Thus, \eqref{pdeinitial} takes the form
\begin{align}\label{KTE}
 \partial_t p  + v \cdot \nabla_x p &+ \partial_z \left( \left( - \alpha (\VF)z 
 + v \cdot \beta (\VF) \right) p \right)\\
 &=  - \lambda (z)\left(p(t,x,v,z)- \FD(x, v)\int_V p(t,x,v^\prime,z)  d v^\prime \right).\nonumber
 \end{align}

\noindent
Following the same lines as in \cite{EHKS14}, we derive a reduced kinetic problem from \eqref{KTE}, where the distribution function does no longer depend on the variable $z$. Integrating  
\eqref{KTE} with respect to $z$, assuming the solution to be compactly supported in the $(x, v, z)$-space, and defining $f(t,x,v): = \int _Z p(t,x,v,z )dz$, i.e. the integral of 
$p$ with respect to $z$,
we obtain
\begin{align}
\partial_t f  + v \cdot \nabla_x f 
= - \lambda_0 \left(f - \FD(x, v) \rho \right)+ \lambda_1 \left( f^z -\FD(x, v) \rho^z\right),\label{kineticsystem1}  
\end{align}
with $
\rho (t,x)  = \int_V f(t,x,v) dv$ denoting the macroscopic cell density, $
f^z(t,x,v) = \int_Z z p(t,x,v,z) dz$ representing the first moment with respect to $z$, 
and 
$
\rho^z (t,x)  = \int_V f^z(t,x,v) dv$ the associated density.
%

\noindent
An approximation for $f^z$ is obtained by the subsequent formal considerations.
Again following \cite{EHKS14}, we multiply \eqref{KTE} with $z$, integrate with respect to $z$, and neglect second order moments in $z$. This leads  to 
\begin{align*}
\partial_t f^z  + v \cdot \nabla_x f^z 
= - \lambda_0 \left(f^z - \FD(x,v) \rho^z \right)
- \alpha (Q)f^z + v \cdot \beta (Q)f.
\end{align*}
A quasi-stationarity assumption for $f^z$, i.e.
neglecting the transport terms in the $f^z$ equation, yields
\begin{align}\label{for-fz}
- \lambda_0 \left(f^z - \FD(x,v) \rho^z \right)
- \alpha (Q)f^z + v \cdot \beta (Q)f =0.
\end{align}
Integrating this with respect to $v$ and rearranging gives 
\begin{align*}
	\rho^z = \frac{1}{\alpha(\VF)} \beta(\VF) \cdot q,
\end{align*}
with
$
q= \int_V v f dv.
$ 
Then solving \eqref{for-fz} for $f^z$ yields
\begin{align}\label{mz}
f^z = \frac{1}{\lambda_0 + \alpha (Q)}
\beta (Q)\cdot \left( v  f + \frac{\lambda_0}{\alpha (Q)} \FD(x,v) q\right).
\end{align}
\noindent
Using this in \eqref{kineticsystem1} gives the still mesoscopic equation
\begin{align}
\label{kineticsystemreduced}
\partial_t f  + v \cdot \nabla_x f 
=- \lambda_0 \left(f - \FD( v) \rho \right) + \lambda_H \nabla_x \VF(v) \left(v  f - \FD ( v) q  \right),
\end{align}
with
\begin{align} 
\lambda_H(\VF(x)) &= \frac{\lambda_1}{\lambda_0} \hat \lambda_H (\VF (x))=  \frac{\lambda_1}{\lambda_0} \frac{1}{1 + \frac{\alpha (Q)}{\lambda_0}} g'(Q).
\label{turningrates}
\end{align} 
A non-dimensional form of \eqref{kineticsystemreduced} is
\begin{align}
\partial_t f + \frac{t_0 c}{x_0} \nabla_x \cdot (v f) &= - t_0 \lambda_0 \left(f - \FD(v) \rho\right)  + \frac{\lambda_1}{\lambda_0} \frac{t_0 c}{x_0}  \hat \lambda_H  \nabla_x \VF\left(f v - \FD(v) q \right),
\end{align}
on the domain
\begin{align*}
	\frac{\Omega_T}{t_0} \times \frac{\Omega_X}{x_0} \times \mathbb S^{n-1}.
\end{align*}
Identifying the Strouhal number $St = \frac{x_0}{t_0 c}$, the Knudsen number $Kn=\frac{1}{t_0 \lambda_0}$, and the ratio of  turning rate coefficients $\eta = \frac{\lambda_1}{\lambda_0}$ as the 
characteristic 
parameters, we write the above as
\begin{align}
\partial_t f + \frac{1}{St} \nabla_x \cdot (v f) &= -\frac{1}{Kn} \left(f - \FD(v) \rho\right)  + \frac{\eta}{St}  \hat \lambda_H  \nabla_x \VF \left(f v - \FD(v) q \right) .
\label{kineticsystemscaled}
\end{align}
\noindent
We consider a slightly generalized version with arbitrary kernels, i.e.
\begin{align}
\label{pdegeneralscaled}
\partial_t f  + \frac{1}{St} v \cdot \nabla_x f  &= L f := (\frac{1}{Kn}L_1 + \frac{\eta}{St} L_2) f,
\end{align}
and
\begin{align}
\label{Ldef}
L_i f =  \int_V \left( k_i(v,v^\prime) f(v^\prime) -  k_i( v^\prime,v) f (v) \right) d v^\prime,\quad i\in\{1,2\}.
\end{align}
Both turning operators conserve mass, i.e.
\begin{align}
	\int_V L_i(f)(v) dv = 0.
	\label{massconservingassumption}
\end{align}
 The first kernel as well as the combined kernel $k_1 + k_2$  are assumed strictly positive and bounded from above: 
\begin{alignat*}{3}
	0 &< k_{1,min} &&\leq k_1(v^\prime,v)        &&\leq k_{1,max}, \\
	0 &< k_{min}   &&\leq k_1(v^\prime,v) + k_2 (v^\prime,v) &&\leq k_{max}. 
\end{alignat*}
Additionally, the first kernel satisfies 
\begin{align*}
	\int_V  k_1(v^\prime,v)  dv^\prime = \kappa_1(x),	
\end{align*}
with some known function $\kappa_1$. Moreover, we assume that there is a probability distribution $F = F (x,v)$ fulfilling for each $x\in \Omega _X$ the detailed balance condition
\begin{align*}
k_1(v^\prime,v)F (x,v) = k_1(v, v^\prime)F (x,v^\prime),
\end{align*}
which we call the equilibrium distribution. 

\begin{example}[Kernels]\label{wahl-kernels}
	Note that \eqref{kineticsystemscaled} fits into the more general framework \eqref{pdegeneralscaled} by choosing the kernels $k_1$ and $k_2$ as 
	\begin{equation}
	\label{kernels-special}
	\begin{aligned}
	k_1(v, v^\prime) &=  \FD(x,v), \\
	k_2(v, v^\prime) &= - \hat \lambda_H \nabla_x \VF \cdot v^\prime \FD(x,v).
	\end{aligned}
	\end{equation}
	However, using 
	\begin{align*}
	k_2(v, v^\prime) = ( a v - b v^\prime ) F(v) \varphi(\nabla m )
	\end{align*}
	with constants $a $ and $b$, gives a flux limited chemotaxis type kernel \cite{CMPS04}. The cor\-res\-pon\-ding linear reorientation operator is
	\begin{align*}
	L_2 f (v) = \left( a \left(\rho v F(v) - \int _V v F(v) dv   f\right)+ b \left(vf - q F(v)\right) \right)  \varphi(\nabla m ).
		\end{align*}
	The flux-limiter $\varphi$ is chosen for example as 
	\begin{align*}
	\varphi(x) =  \frac{x}{\sqrt{1+\vert x \vert^2 }},
	\end{align*}
	compare \cite{Lev84,BKP17}. 	
	The chemoattractant concentration $m(t,x)$ is usually governed  by a diffusion equation 
	\begin{align*}
	\partial_t m - D_m \Delta_x m = \gamma \rho  - \delta m
	\end{align*}
	with a production proportional to the population density $\rho$ and an exponential decay with rate $\delta$. 
\end{example}

\subsection{From the mesoscopic equation to the diffusion limit}
For notational simplicity, we will drop the $t$ and $x$ dependency in the next sections and use the shorthand notation 
\begin{align*}
\intV{\cdot} := \int_V \cdot \ dv
\end{align*} for integrals over the velocity space.

\noindent
Defining the parabolic scaling parameter as $\epsilon := St$, we write \eqref{pdegeneralscaled} as
\begin{align}
	\partial_t f + \frac{1}{\epsilon} \nabla_x \cdot (v f) &= \frac{St^2}{Kn} \frac{1}{\epsilon^2} L_1 f  + \frac{\eta}{\epsilon}  L_2 f,
	\label{pdegeneraleps}
\end{align} which will converge to a diffusion equation for the cell density 
\begin{align*}
\rho(t,x) &:= \intV{f(t,x,v)}
\end{align*}
for $\epsilon \rightarrow 0$ while $\eta$ and the ratio $R:=\frac{St^2}{Kn}$ are fixed.
Following the works in \cite{LarKel75,BSS84,hillen2002hyperbolic}, we derive an equation for the cell density $\rho$ under the first-order symmetry assumption
\begin{align}
	\intV{ v F (v)} = 0.
	\label{symmetryassumption}
\end{align}

\noindent
We use the asymptotic expansion
\begin{align}
	f = f_0 + \epsilon f_1 + \mathcal{O}(\epsilon^2),
	\label{asymptoticexpansion}
\end{align}
which we plug into \eqref{pdegeneraleps}. Comparing coefficients of different orders in $\epsilon$ yields to zeroth order
\begin{align*}
L_1 f_0 = 0 \Longrightarrow f_0 (v)= \rho_0  F (v),
\end{align*}
and consequently to first order
\begin{align}
\label{firstorder}
v \cdot \nabla_x f_0 =  R L_1 f_1
+   \eta L_2 f_0 \quad \Leftrightarrow \quad L_1 f_1= \frac{1}{R} [v  \cdot \nabla_x (\rho_0 F(v)) 
- \eta \ \rho_0 L_2 F(v)].
\end{align}
With the notations 
\begin{subequations}\label{notation-Hs}
\begin{align}
L_1 H_1 (v)&=  -  \frac{1}{R} v F(v),\label{notations-Hsa}\\
L_1 H_2(v) &=  - \frac{\eta}{R} L_2 F (v)  +\frac{1}{R} v \cdot \nabla_x F(v)\label{notations-Hsb} 
\end{align}
\end{subequations}
we can write 
\begin{align}\label{L1-gl}
 L_1f_1=-L_1H_1(v)\cdot \nabla_x \rho _0+L_1H_2(v)\rho _0.
\end{align}
Since $\FD$ is first-order symmetric by \eqref{symmetryassumption}, we also have $\intV{ v \nabla_x F(v) } =0$. 
Together with mass conservation (see \eqref{massconservingassumption}) this ensures that the right hand side in \eqref{L1-gl} vanishes when integrated with respect to $v$, which -on an adequate function space- allows to solve \eqref{L1-gl} for $f_1$. We write, at this point formally,
\begin{align*}
f_1 =   -   H_1(v) \cdot \nabla_x \rho_0 + \rho_0 H_2(v).
\end{align*}
\noindent
Inserting the asymptotic expansion \eqref{asymptoticexpansion} into \eqref{pdegeneraleps} and integrating with respect to $v$ yields under assumptions \eqref{massconservingassumption} and \eqref{symmetryassumption}:
\begin{align*}
0 &= \partial_t \rho_0 + \frac{1}{\epsilon} \nabla_x \cdot \intV{vf_0 + \epsilon v f_1 + \mathcal{O}(\epsilon^2)}\\
  &= \partial_t \rho_0  +   \nabla_x \cdot \intV{  v f_1 } + \mathcal{O}(\epsilon)\\
  &=\partial_t \rho_0  -    \nabla_x \cdot \left( D \nabla_x     \rho_0  -   \Gamma   \rho_0  \right) +  \mathcal{O}(\epsilon),
\end{align*}
with 
\begin{align*}
D &:=   \intV { v \otimes  H_1(v)  },& \text{ (diffusion tensor)} \\
\Gamma &:= \intV{  v H_2(v)  }& \text{ (drift velocity)}.
\end{align*}

\begin{remark}
	For the special case of equation  (\ref{kineticsystemscaled}),  i.e. for the particular choice of kernels in \eqref{kernels-special}, we observe as in \cite{EHKS14} that if we consider the 
	weighted space\footnote{in which the inner product $(f,\FD(v))=\int_Vf(v)\FD(v)\frac{dv}{\FD(v)}=\rho $} $L^2(V;\frac{1}{\FD(v)})$ then the operator 
	$$L_1f=(\FD(v)\intV{f}-f)$$ 
	can be inverted 
	on the orthogonal complement of $\text{span}(\FD(v))$, the (pseudo)inverse being obtained by multiplication of the right hand side by $-1$, i.e. 
	$$L_1f=\psi \quad \Rightarrow \quad f=-\psi ,\quad \text{for }\psi \in \text{span}(\FD(v))^{\bot }.$$
\noindent
From \eqref{notation-Hs} we deduce
	\begin{align*}
	\left( H_1 - \intV{ H_1 }  \FD(v) \right)  =   \frac{1}{R} v  \FD(v)   
	\end{align*}
	and therefore
	\begin{align*}
	H_1(v)   =    \frac{1}{R}  v  \FD(v).
	\end{align*}
	Moreover,
	\begin{align*}
	L_2 F (v)  =   \hat \lambda_H \nabla_x \VF \ v  \FD(v) 
	\end{align*}
	and
	\begin{align*}
	H_2(v)  = \frac{1}{R}\left (\eta \hat \lambda _H \nabla_x \VF v\FD(v)-v\cdot \nabla_x \FD(v)\right ).
	\end{align*}
	\noindent
	This gives
	\begin{align*}
	D  &= \frac{1}{R} \intV{  v \otimes v \FD(v)  } 
	\\
	\Gamma  & = \frac{\eta }{ R} \intV { v \otimes v \FD(v)  } \cdot  (\hat \lambda_H \nabla_x \VF) -
	\frac{1} R  \nabla_x \cdot \intV{  v \otimes v \FD(v)  }
	=  \eta D  \hat \lambda_H \nabla_x \VF  - \nabla_x \cdot D
	\end{align*}
	and finally
	\begin{align}
	\partial_t \rho _0  -   \nabla_x \cdot \left( \nabla_x \cdot (\rho _0D)  
	-   \eta \rho _0D \hat \lambda_H  \nabla_x \VF \right)
	=0.
	\label{diffusionequation}
	\end{align}
	\noindent
\noindent
This macroscopic equation involving myopic diffusion and a haptotaxis-like term characterizes the evolution of the tumor as a population of cells guided by the tissue during their migration. It 
corresponds to the one obtained in \cite{EHKS14,EHS}. 
Using the terminology therein we will call $D$ the tumor diffusion tensor. 
Notice that $D$ is also involved in the haptotactic sensitivity coefficient $D\hat \lambda_H$; the latter carries in $\alpha (Q)$ and $g'(Q)$ the information from the subcellular (receptor binding) level and 
by way of $\lambda _0$ and $\eta $ also encodes the cell turning rate, which is a microscopic quantity.

\end{remark}

\section{First-order moment closures}\label{1st-order-closure}

In the following section we derive first-order moment approximations to the kinetic equation. See \cite{hillenM5,hillen2002hyperbolic} for higher order approximations to related problems. 

\subsection{Balance equations}

We derive equations for the density $\rho = \intV{f}$ and momentum $ q = \intV{ v f }$.
Higher moment approximations can be developed as well, compare for example \cite{Ritter2016}.
We start again with the scaled version of the kinetic equation
\begin{align}
\epsilon^2 \partial_t f  + \epsilon v \cdot \nabla_x f  &=R L_1 f  
+ \epsilon \eta L_2 f.
\label{eq:LKEGeneralScaled}
\end{align}
Integrating with respect to $v$ gives the continuity equation 
\begin{align}
\label{eq-rho-q}
\epsilon \partial_t \rho  +  \nabla_x \cdot q  =0,
\end{align}
which does not depend on the collision operators. 
Equations for the momentum follow from multiplying \eqref{eq:LKEGeneralScaled} with $v$ and integrating over $V$:
\begin{align*}
\epsilon^2 \partial_t q  + \epsilon \nabla_x \cdot P  
= R \intV{v L_1 f}
+ \epsilon \eta \intV{v L_2  f }.
\end{align*}
In the momentum equations, the pressure tensor
$$
P := \intV {v \otimes v  f }
$$
contains the second moments of $\PD$. Since the system is undetermined these equations have to be closed 
by an approximation of $P$ using only $\rho$ and $q$. This is usually obtained by 
defining an ansatz function $f^A (v; \rho,q)$ and using this function to approximate
\begin{align*}
P = \intV{ v \otimes v f } \approx \intV{v \otimes v f^A } =: P^A.
\end{align*}
The closed system of equations is then
\begin{equation}
	\label{eq:firstOrderSystem}
	\begin{aligned}
	\epsilon \partial_t \rho  +  \nabla_x \cdot q  &= 0, \\
	\epsilon^2 \partial_t q  + \epsilon \nabla_x \cdot P^A(\rho,q) &= R \intV{v L_1 f^A(\rho,q)}
	+ \epsilon \eta \intV{v L_2  f^A(\rho,q)}.	
	\end{aligned}
\end{equation}
\begin{example}
	For the glioma example we have
	\begin{align*}
	\intV{v L_1 f }   &=  - \lambda_0 (q - \rho \intV{ v \FD(v)}), \\
	\intV{v L_2 f }   &= -  (\intV{ v \otimes v f }- q \otimes \intV{v \FD(v)}) \lambda_H
	=  - (P - q \otimes \intV{v \FD(v)}) \lambda_H.
	\end{align*}
	Thus, the momentum equation becomes 
	\begin{align*}
	\epsilon^2 \partial_t q  + \epsilon \nabla_x \cdot P^A  
	=  - \lambda_0 (q - \rho \intV{ v \FD(v)})
	 - \epsilon  (P^A - q \otimes \intV{v \FD(v)})\lambda_H,
	\end{align*}
	which simplifies to 
	\begin{align*}
	\epsilon^2 \partial_t q  + \epsilon \nabla_x \cdot P^A  
	=  - q \lambda_0 
	- \epsilon P^A \lambda_H
	\end{align*}
	for a first-order symmetric fiber distribution $\intV{v\FD(v)} = 0$.
\end{example}

In the following we consider different ansatz functions and show the resulting closure relations for $P^A$. 

\begin{definition}[Normalized moments]
	In the subsequent derivations it will be useful to consider normalized moments indicated by a hat. For example, normalized momentum and pressure tensor are denoted by
	\begin{align*}
	\normal{q} := \frac{q}{\rho}, \quad \normal{P} := \frac{P}{\rho}.
	\end{align*}	
\end{definition}

\begin{remark}
	\label{rem:PinDiffusionLimit}
	In the diffusion limit the distribution can be written as the equilibrium 
	plus $\mathcal{O}(\epsilon)$ perturbations: $f = \rho(t,x) F(v) + \epsilon g$. In order for the moment 
	approximations to converge to the correct limit as $\epsilon \rightarrow 0$, the ansatz should 
	reproduce the correct pressure tensor $P$ when the zeroth- and first-order moments correspond to 
	the equilibrium state; i.e. if $\normal{q} = \intV{Fv}$ then also 
	\begin{align*}
		\intV{ v \otimes v f^A } = \intV{  v \otimes v F(v) } =: D_F
	\end{align*}
	should hold. 
\end{remark}

\subsection{Linear ($P_1^{(F)}$-)closure}
The simple perturbation ansatz
\begin{align*}
f^A= (a + \epsilon v \cdot b) F(v)
\end{align*}
gives the correct pressure tensor $\intV{v\otimes v f^A}$ in the equilibrium $F(v)$. 
The multipliers $a$ and $b$ are chosen to fulfill the moment constraints $\intV{f^A} =\rho$ and $\intV{v f^A} = q $, i.e. they are the solutions of the linear system 
\begin{align*}
  a +  \epsilon b \cdot \intV{ v F} &= \rho,\\
  a \intV{ v F} + \epsilon \intV{ v \otimes v F }   b   &= q,
\end{align*}
or equivalently
\begin{align*}
	a &= \rho - \epsilon b \cdot \intV{ v F}, \\
	\epsilon \left( \intV{v\otimes v F} - \intV{vF}\intV{vF}\trans \right) b &= q - \rho \intV{v F}.
\end{align*}
\begin{remark}
	The above system has a unique solution iff the symmetric matrix $A = \left( \intV{v\otimes v F} - \intV{vF} \intV{v F}\trans \right)$ is invertible. Standard moment theory \cite{Ker76} tells us that the matrix $A$ is positive-semi definite if $F$ is non-negative, or equivalently if the moments $\intV{v\otimes v F}$ and $\intV{vF}$ are second-order realizable. Furthermore, $A$ is strictly positive definite, and therefore invertible, if the moments of $F$ lie in the interior of the realizability domain, which is the case for all $F$ with non-flat support: $supp(F) \nsubseteq E$ for any plane $E$. 
\end{remark}
The approximated pressure tensor is 
\begin{align*}  
P^A = \rho \normal{P}^A(\normal{q}),
\end{align*}
with 
\begin{equation}  
	\begin{aligned}
		 \normal{P}^A(\normal{q}) = \frac{\intV{v \otimes v  f^A  }}{\intV { f^A }}=  \frac{\intV{v \otimes v  ( a + \epsilon v \cdot b) F(v)  }}{\intV{ ( a + \epsilon v \cdot b) F(v)  }} \\
		 = \frac{ a D_F  + \epsilon \intV{  v \otimes v v \cdot b F(v)} }{ a+\epsilon \intV{v \cdot b F(v)}},
		 \label{eq:P1P} 
	\end{aligned}
\end{equation}
where $D_F = \intV{v \otimes v F(v)}$ is the pressure tensor of the equilibrium. 
If $F$ is symmetric, i.e. if $\intV{vF(v)} = 0$ and $\intV{v\otimes v vF(v)} = 0$ then the multipliers are simply 
\begin{align*}
	a &= \rho, \\
	b &= (\epsilon D_F)^{-1}q
\end{align*} 
and the pressure tensor becomes $P^A =\rho D_F$. 

\subsection{Nonlinear ($M_1^{(F)}$-)closure}

For this closure we use the approximating function 
\begin{align}
\label{eq:M1Ansatz}
f\sim f^A = a \exp( \epsilon v \cdot b) F(v).
\end{align}
In contrast to the linear closure discussed in the previous section, the ansatz function $f^A$ is now positive, which leads to several advantages
for the resulting approximating equations, see \cite{AniPenSam91,Goudon2005,BruHol01}. The computations proceed in a similar way as before. Again, the multipliers $a $ and $b$  are determined from the moment constraints on $f^A$: 
\begin{align*}
 \rho =  \intV{f^A} &= \intV{a \exp(  \epsilon v \cdot b) F(v) } ,\\
  q = \intV{vf^A} &= \intV{ v a \exp(  \epsilon  \cdot b) F(v) } .
\end{align*}
This gives 
\begin{align*} 
  \normal{q}(b) = \frac{\intV{ v  \exp(  \epsilon v \cdot b) F(v) }} {\intV{  \exp(  \epsilon v \cdot b) F(v) }}
\end{align*}
and
\begin{align*} 
P^A = \rho \normal{P}^A(\normal{q}),
\end{align*}
with
\begin{align}
\label{eq:M1SecondMoment}
\normal{P}^A(\normal{q}) = \frac{\intV{ v \otimes v   \exp( \epsilon v \cdot b) F(v) }} {\intV{ \exp(\epsilon  v \cdot b) F(v) }}.
 \end{align}

\subsection{Simplified nonlinear closure ($K_1^{(F)}$)}

We assume  $F\geq 0$, $\intV{F} =1$, and $\intV{vF} =0$. Remember that this implies that 
$\tr(<v\otimes vF>) = \tr(D_F) = 1$.
Now we want to extend the concept of Kershaw closures  \cite{Ker76} for our special situation. We determine the second moment $P^A$ via an interpolation between the free-streaming value  $P_{\delta} = \rho \frac{q\otimes q}{\vert q\vert^2}$ for $\vert q \vert = \rho$ and the equilibrium solution $P_{eq} = \rho D_F$ for $\vert q \vert=0$ (compare \eqref{eq:M1SecondMoment} with $b = 0$) and make the ansatz
\begin{align}
\label{eq:KershawP}
P^A = \rho  \normal{P}^A(\normal{q}) := \rho\left(\alpha D_F + (1-\alpha)\frac{\normal{q}\otimes\normal{q}}{\vert \normal{q}\vert^2}\right),
\end{align}
where $\alpha= \alpha(\normal{q})$ is given below.
To obtain a reasonable model it is crucial to satisfy the so-called realizability conditions \cite{Ker76,Schneider2014,Schneider2015c}, i.e. the fact that the moments $\normal{q},\normal{P^A}$ are generated by a non-negative distribution function. In this case we have to ensure that for every $\rho \geq 0$ and $\vert \normal{q} \vert \leq 1$ we have that \cite{Ker76}
\begin{align*}
\normal{P}-\normal{q}\otimes \normal{q} \geq 0 \; \; \mbox{and} \;\; \tr(\normal{P}) = 1.
\end{align*}
The trace equality immediately follows for all $\alpha\in\R$ since $\tr(D_F) = \tr(\frac{\normal{q}\otimes\normal{q}}{\vert \normal{q}\vert^2}) = 1$.
Plugging in the definition of $P^A$ gives that 
\begin{align*}
\normal{P}^A-\normal{q}\otimes \normal{q} = \alpha D_F + (1-\alpha - \vert \normal{q}\vert^2)\normal{q}\otimes \normal{q}
\end{align*}
is positive semidefinite if $\alpha\geq 0 $ and $1-\alpha \geq \vert \normal{q}\vert^2$.
We use 
\begin{align}
	\alpha = 1-\vert \normal{q}\vert^2,
	\label{kershawInterpolationParam1}
\end{align} 
which satisfies both inequalities under the first-order realizability condition $\vert \normal{q}\vert \leq 1$.
Note that in the special case $D_F = \frac{I}{3}$ the original Kershaw model \cite{Ker76} is recovered.

\begin{thm}[Hyperbolicity of the generalized symmetric Kershaw moment system]\label{only-theorem}
	For any distribution $\FD : \US \mapsto \R^+$ that is
	\begin{itemize}
		\item normalized: $\intV{\FD} = 0$, 
		\item symmetric w.r.t to the first moment $\intV{\FD v} = 0$,
		\item non-flat: $\intV{\FD (x\trans v)^2} > 0, \forall x \in \US$
	\end{itemize}
	the first order moment system \eqref{eq:firstOrderSystem} together with the Kershaw closure \eqref{eq:KershawP} is strictly hyperbolic for all realizable moment vectors $(\rho,q)$, except for $|\normal{q}| = 1$ with $\normal{q}$ parallel to an eigenvector of $\intV{\FD vv\trans}$. In this case the system matrix still has real eigenvalues but cannot be diagonalized any more.   
	\label{thm:KershawHyperbolicity}
\end{thm}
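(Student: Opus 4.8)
The plan is to examine the principal symbol of \eqref{eq:firstOrderSystem}. Fix a unit vector $\outernormal$ and set $A(\outernormal):=\partial_{(\rho,q)}\big(\outernormal\trans q,\;P^A\outernormal\big)$; hyperbolicity in the direction $\outernormal$ means this Jacobian has only real eigenvalues and, away from the exceptional set, a complete eigenbasis. Inserting $\alpha=1-|\normal q|^2$ into \eqref{eq:KershawP} gives the closed form
\begin{align*}
P^A=\Big(\rho-\tfrac{|q|^2}{\rho}\Big)D_F+\frac{q\otimes q}{\rho},\qquad D_F:=\intV{\FD vv\trans},
\end{align*}
which is homogeneous of degree one in $(\rho,q)$; hence $A(\outernormal)$ depends on $(\rho,q)$ only through $\normal q=q/\rho$ and one may fix $\rho=1$, $q=\normal q$, with $|\normal q|\le1$ the realizability constraint. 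Abbreviate $\mu:=\outernormal\trans\normal q$, $\delta:=\outernormal\trans D_F\outernormal$, $\tau:=\normal q\trans D_F\outernormal$, and note that $\delta=\intV{\FD(\outernormal\trans v)^2}>0$ by the non-flatness hypothesis.

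Differentiating the flux, and writing $(a;b)$ for the state vector with density component $a$ and momentum component $b$, one gets the block form
\begin{align*}
A(\outernormal)=\begin{pmatrix}0 & \outernormal\trans\\[3pt] (1+|\normal q|^2)D_F\outernormal-\mu\normal q & \mu I+\normal q\,\outernormal\trans-2\,(D_F\outernormal)\,\normal q\trans\end{pmatrix}.
\end{align*}
The crucial algebraic observation is that each column of $A(\outernormal)-\mu I$ is a linear combination of $(1;\normal q)$ and $(0;D_F\outernormal)$, so $A(\outernormal)-\mu I$ maps all of state space into the two-plane $V:=\operatorname{span}\{(1;\normal q),(0;D_F\outernormal)\}$. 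Consequently $V$ is $A(\outernormal)$-invariant, $A(\outernormal)$ acts as the scalar $\mu$ on the quotient by $V$ (the associated eigenvectors including the transverse shears $(0;w)$ with $w\perp\outernormal,\normal q$), and on $V$, in the basis $\{(1;\normal q),(0;D_F\outernormal)\}$,
\begin{align*}
A(\outernormal)\big|_V=\begin{pmatrix}\mu & \delta\\[3pt] 1-|\normal q|^2 & \mu-2\tau\end{pmatrix}.
\end{align*}
Thus the whole spectral problem collapses to this explicit $2\times2$ matrix, whose discriminant is $4\big(\tau^2+\delta(1-|\normal q|^2)\big)$.

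The conclusion is then read off. Since $\delta>0$ (non-flatness) and $1-|\normal q|^2\ge0$ (realizability), the discriminant is nonnegative, so $A(\outernormal)$ has only real eigenvalues for every realizable $(\rho,q)$ and every $\outernormal$. Where it is strictly positive, $A(\outernormal)|_V$ has two simple real eigenvalues, necessarily distinct from $\mu$ because $\det\!\big(A(\outernormal)|_V-\mu I\big)=-\delta(1-|\normal q|^2)\neq0$; combined with the non-defective $\mu$-eigenspace this yields a complete real eigenbasis, i.e.\ (strict) hyperbolicity. The discriminant vanishes exactly on $\{|\normal q|=1\}\cap\{\normal q\trans D_F\outernormal=0\}$, which is the degenerate configuration singled out in the statement (momentum on the boundary of the realizability domain, aligned with an eigenvector of $D_F$); there $A(\outernormal)|_V=\left(\begin{smallmatrix}\mu&\delta\\ 0&\mu\end{smallmatrix}\right)$ with $\delta\neq0$ is a genuine Jordan block, so $A(\outernormal)$ keeps its real eigenvalues ($=\mu$) but can no longer be diagonalized.

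Differentiating $P^A$ and assembling the block form is routine. The load-bearing step is the rank-two identity for $A(\outernormal)-\mu I$, which is precisely what reduces the $(1{+}\text{velocity-dimensional})$ eigenvalue problem to the $2\times2$ determinant and makes the sign of $\tau^2+\delta(1-|\normal q|^2)$ decisive. The delicate part — and the only place the argument can break down — is the boundary bookkeeping: identifying exactly the pairs $(\normal q,\outernormal)$ where the discriminant vanishes, reconciling that locus with the eigenvector condition of the theorem, and checking that off this locus the geometric multiplicity of the repeated eigenvalue $\mu$ matches its algebraic multiplicity, so that no Jordan block is hidden in the quotient part. This last verification is exactly where the strict positivity $\delta>0$ stemming from the non-flatness of $\FD$ is indispensable.
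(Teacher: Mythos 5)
Your reduction takes a genuinely different route from the paper's. The paper conjugates the Jacobian by a rotation sending $\normal{q}/|\normal{q}|$ to $e_1$ and then computes characteristic polynomials separately for the three directions $n_j$ with $\hat{R}n_j=e_j$; you instead observe that the range of $J(F\cdot n)-\mu I$, $\mu=n\trans\normal{q}$, lies in the two-plane $V=\operatorname{span}\{(1;\normal q),(0;D_F n)\}$, so the spectrum is $\mu$ (twice, from the quotient) together with that of an explicit $2\times2$ matrix. I have checked the column identity and the matrix $\left(\begin{smallmatrix}\mu&\delta\\ 1-|\normal q|^2&\mu-2\tau\end{smallmatrix}\right)$; both are correct, and your discriminant $4\bigl(\tau^2+\delta(1-|\normal q|^2)\bigr)$ reproduces the paper's $h$ exactly in the transverse directions. (In the longitudinal direction the paper's characteristic polynomial contains a slip --- the linear coefficient should be $-2|\normal q|(1-S_{11})$ rather than $-2+2S_{11}|\normal q|$ --- and your general formula gives the correct value.) For $|\normal q|<1$ your argument is complete and cleaner than the paper's case-by-case computation: the discriminant is positive, $\det\bigl(J|_V-\mu I\bigr)=-\delta(1-|\normal q|^2)\neq0$ separates the two $V$-eigenvalues from $\mu$, and $\operatorname{rank}(J-\mu I)\le 2$ forces the geometric multiplicity of $\mu$ to equal its algebraic multiplicity $2$.

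The gap is in the boundary bookkeeping, exactly the step you flag as delicate. First, $\tau=\normal q\trans D_F n$ depends on the direction $n$, so the locus $\{|\normal q|=1,\ \tau=0\}$ is \emph{not} the set ``$\normal q$ parallel to an eigenvector of $D_F$'': since $D_F\normal q\neq0$, for every unit $\normal q$ there is a whole great circle of directions $n\perp D_F\normal q$ with $\tau=0$, and for those your $2\times2$ block is a genuine Jordan block $\left(\begin{smallmatrix}\mu&\delta\\0&\mu\end{smallmatrix}\right)$ with $\delta>0$. Second, a vanishing discriminant is not the only failure mode: when $|\normal q|=1$ and $\tau\neq0$ the two $V$-eigenvalues $\mu$ and $\mu-2\tau$ are distinct, but one of them now \emph{equals} the quotient eigenvalue $\mu$ (precisely because $\det(J|_V-\mu I)=0$ there), so the algebraic multiplicity of $\mu$ rises to $3$ while $\operatorname{rank}(J-\mu I)=2$ for every $n$ not parallel to $\normal q$, leaving geometric multiplicity $2$ --- defective. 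So the conclusion you ``read off'' does not actually reconcile with the theorem's exceptional set; your machinery, pushed one step further, shows diagonalizability failing for \emph{every} $|\normal q|=1$, not only for eigenvector-aligned $\normal q$. To be fair, the paper's own proof shares this blind spot (for $j=2$ it inspects diagonalizability only when all four eigenvalues vanish, i.e.\ $S_{12}=0$, and never examines the triple eigenvalue $0$ occurring for $|\normal q|=1$, $S_{12}\neq0$), but your write-up asserts agreement with the stated exceptional set at the one place where the agreement does not hold.
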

\begin{proof}
	See \ref{app:HyperbolicityKershaw}.
\end{proof}

\section{Higher-order moment models}\label{higer-order-closure}
Analogously to the first-order moment system, higher-order approximations are conceivable. Let $\basisf_N(v) = (a_0(v), ... a_{K-1}(v))$ be the basis of a $K$- dimensional subspace of $L_2(V)$ that 
contains polynomials of up to order $N$. 
The corresponding moments are defined as $\momf _N:= \intV{\PD \basisf_N}$. By multiplying the kinetic equation  \eqref{pdegeneraleps} with $\basisf_N$ and integrating over $V$ we get a system for 
the moments:
\begin{align}
\label{eq:MomentSystem}
\partial_t \momf_N + \frac{1}{\epsilon}\nabla_x \cdot \intV{v \basisf_N \PD}  
= \intV{(\frac{R}{\epsilon ^2}L_1(\PD)+\frac{\eta}{\epsilon}L_2(\PD))\basisf_N}.
\end{align}
As in the first-order case, $\PD$ is approximated by an ansatz function
\begin{align*}
\PD^A[\momf_N](v) \approx \PD(v),
\end{align*} depending on the moments, to get a closed form
\begin{align*}
\partial_t \momf_N + \nabla_x \cdot \intV{v \basisf_N \PD^A} = \frac{R}{\epsilon ^2} \intV{L_1(\PD^A) \basisf_N} + \frac{\eta}{\epsilon} \intV{L_2(\PD^A))\basisf_N}
\end{align*}
of the moment system. 
One choice for the basis $\basisf_N$ are monomial functions 
\begin{align*}
a_k = v^{\pmb{i}(k)} = v_x^{i_x(k)} v_y^{i_y(k)} v_z^{i_z(k)},
\end{align*}
where $\pmb{i}(k)$ is a bijective mapping from basis indices $k = 0,1,...,K(N)-1$ to multi-indices $\pmb{i} \in \pmb{I}(K)$. 

\noindent
The classical $P_N$ and $M_N$ methods use the ansatz functions
\begin{align*}
\PD^A = \mplf_N \cdot \basisf_N \qquad \text{ and } \qquad 
\PD^A = \exp(\mplf_N \cdot \basisf_N),
\end{align*} 
respectively. Analogously to the first-order methods we define the modified $P^{(F)}_N$ and $M^{(F)}_N$ as 
\begin{align*}
\PD^A = (\mplf_N \cdot \basisf_N) F(v)\qquad \text{ and } \qquad 
\PD^A = \exp(\mplf_N \cdot \basisf_N) F(v),
\end{align*}
respectively, in order to incorporate the equilibrium of the reorientation kernel $F(v)$.

\section{Numerical results}
\label{sec:Results}
As already mentioned before, minimum-entropy and Kershaw closures are only well-defined for realizable moment vectors.  It is easy to show that a standard first-order scheme provides this property under a CFL condition that heavily depends on the chosen physical parameters \cite{Schneider2015a,Schneider2016,Schneider2016aCodeIMEX}. 
To increase the efficiency of our approximation, we use the second-order realizability-preserving scheme presented in ~\ref{app:SecondOrderIMEX}. Due to space limitations we postpone the analysis of the scheme indeed pre\-ser\-ving realizability to a follow-up paper.

\subsection{Numerical experiments}
All numerical simulations will be done for the glioma equation \eqref{kineticsystemreduced}. We have not yet specified the form of the equilibrium $\FD(v)$. 
We use a quadratic ansatz for the fiber distribution
\begin{align*}
	 \FD(v) &=  \frac{3}{4 \pi \trace{D_W}} \left(v\trans D_W v \right).
\end{align*} This so-called peanut distribution \cite{EHKS14} is a very simple model that relates the fiber distribution to the local water diffusion tensor $D_W \in \R^{3 \times 3}$,  which can be measured by DTI \cite{LeBihan2001DTI}. It has the additional advantage that all the coefficients in the diffusion limit can be computed analytically.  

We estimate the volume fraction $\VF$ from  the local water diffusion tensor $D_W$,  using either the fractional anisotropy or the characteristic length. The fractional anisotropy $FA(D_W)$ estimate from \cite{EHKS14} is 
\begin{align*}
\VF(x) &= FA(D_W(x)) := \sqrt{\frac{3}{2} \frac{\sum_{i=1}^3 (\lambda_i - \bar{\lambda})^2}{\sum_{i=1}^3 \lambda_i^2}},
\end{align*}
with the eigenvalues $\lambda_i$ of the water diffusion tensor $D_W$ and the mean eigenvalue $\bar{\lambda} =\frac{1}{3} \sum_{i=1}^3 \lambda_i$. The characteristic length estimate $CL(D_W)$ from \cite{EHS} leads to
\begin{align}\label{estimated-Q}
    \VF(x) &:= CL(D_W(x)) = 1 - \left( \frac{\trace{D_W}}{4 \lambda_1} \right)^{\frac{3}{2}},
\end{align}
where $\lambda_1$ is the maximum eigenvalue of $D_W$. 
In the moment models we use mass conserving, thermal boundary conditions for the incoming characteristics
\begin{align*}
(v \cdot n) \PD &= (v \cdot n) \FD(v) \int_{\{v' \cdot n > 0\}} |v' \cdot n| \PD(v') dv', & v \cdot n < 0,
\end{align*}
with the unit outer normal $n$. This means that outgoing particles are absorbed at the boundary and emitted according to the fiber distribution. For the diffusion approximation the only condition is that there is no flux over the boundary.

Finally, to compare different models we define the pointwise relative difference between two functions $h_1(x), h_2(x)$ as 
\begin{align*}
	\relerr{h_1(x)}{h_2(x)} &= \frac{\left| h_1(x) - h_2(x) \right|}{\norm{h_2(x)}_{\infty}}.
\end{align*}

\subsubsection{Abruptly ending fiber strand}
This setting models an initially concentrated mass of cells following a white matter tract that abruptly ends. While the latter is not to be expected for a real brain geometry, we use it in order to show some notable effects in the glioma equation. The involved diffusion tensor and volume fraction are both spatially varying.
The computational domain is
\begin{align*}
	\Omega_{TXV} &= [0,T] \times [0,X]^2 \times c\,\US ,\\
	T &= 2,~ 
	X = 3,~
	c = \frac{X}{ \epsilon T},
\end{align*}
where the cell speed  $c$ is chosen to adjust the parabolic scaling parameter $\epsilon = St = \frac{X}{cT}$ from \eqref{pdegeneraleps}. The coefficients $\lambda_0, \lambda_1$ involved in the turning rate are chosen such that $R = \eta = 1$:
\begin{align*}
	\lambda_0 &= c^2 \frac{T}{X^2} = \frac{1}{\epsilon^2 T},\\
	\lambda_1 &= \lambda_0.
\end{align*}
From \eqref{turningrates} we see that the attachment and detachment rates $k^+ , k^{-} $ have to be scaled together with the turning rate coefficient $\lambda_0$ and hence set them all equal $k^+ = k^{-} = \lambda_0$.
The fiber geometry is modeled by setting the water diffusion tensor $D_W(x)$ as a function of space. We use a diagonal matrix
\begin{align*}
	D_W(x) &= 
	\begin{pmatrix}
	D_{00}(x) & 0 \\ 0 & 1
	\end{pmatrix}, \\
	D_{00}(x) &= 1 + 5 \exp\left(-\frac{\nu(x)}{2 \sigma^2}\right),\\
	\nu(x) &= \max\left\lbrace 0, x_1 - \frac{X}{2}, |x_2 - \frac{X}{2}| - 0.1 \right\rbrace,
\end{align*}
where only the first eigenvalue varies in space to blend smoothly between a strongly concentrated distribution in x-direction with $D_{00} = 6$ and an isotropic distribution $D_{00} = 1$. For simplicity, the volume fraction $\VF(x)$ is then computed as $FA(D_W(x))$.

The initial condition is a square of length $0.1$ centered at $(0.5,1.5)$:
\begin{align*}
	f(t=0,x,v) = \frac{1}{4\pi} 
	\begin{cases}
		1 & x \in [0.45,0.55]\times[1.45,1.55],\\
		10^{-4} & \mbox{else}.
	\end{cases}
\end{align*}
\paragraph{Convergence to the diffusion limit}
We compare the diffusion approximation from \eqref{diffusionequation} to various moment models as $\epsilon \rightarrow 0$. 
\figref{fig:diffusionlimit} shows the solution for the diffusion approximation $D$(\ref{fig:diffusionlimitD}) alongside the first-order Kershaw method $K_1^{(F)}$(\ref{fig:diffusionlimitK1} - \ref{fig:diffusionlimitK001}) at $\epsilon = 1,0.5,0.25,0.1,0.01$. Additionally, the pointwise relative difference between both models (\ref{fig:diffusionlimitKD025} - \ref{fig:diffusionlimitKD001}) at $\epsilon = 0.25,0.1,0.01$ is shown. Far from the diffusion limit at $\epsilon = 1$ the cells travel exactly once through the domain ($St = 1$) and have an expectation of one ($Kn = 1$) velocity jump. Therefore, we see a strongly advection dominated behavior with very little influence from the underlying fiber distribution. As $\epsilon$ gets smaller, 
the Kershaw model becomes increasingly similar to the diffusion approximation. Relative pointwise differences also decrease although even at $\epsilon =  0.01$ some discrepancy in the range of $2\% - 5\%$ remains. We attribute this to inherent differences between the numerical schemes at a not yet fine enough grid. 
As a re\-fe\-rence we show the standard $P_5$ solution in \figref{fig:P5} at $\epsilon = 0.1$ and $\epsilon =0.01$.  Note that higher moment order $P_N$ models are not shown here as they do not differ significantly from the $P_5$ solution. From the relative difference to $P_5$ it is apparent that at $\epsilon = 0.01$ the solution is so close to the diffusion limit that moment models yield only a marginal improvement.  Again, the remaining $2\%$ difference in \ref{fig:P5diffD001} can be attributed to the differences in the numerical schemes. However, at $\epsilon=0.1$ the diffusion approximation starts to lose validity and deviates from the $P_5$ solution over $10\%$ in places. The first-order Kershaw model gives a noticeable improvement in this case although a difference of $5\%$ remains. 

\paragraph{Modified and standard moment models}
\figref{fig:ComparePN}~ shows the solution at $\epsilon = 0.1$, both for the standard $P_1,P_3,P_5$ models and the modified $P_1^{(F)},P_3^{(F)},P_5^{(F)}$ models. The standard $P_1$ model cannot represent the correct pressure tensor if the distribution is in equilibrium $\PD(x) = \FD(x)$ and thus does not converge to the diffusion limit. This can be observed in \ref{fig:ComparePN_P1}. Here, the modified model $P_1^{(F)}$ that includes $\FD$ in the ansatz function leads to a great improvement. For the higher moment-orders the difference between standard and modified models becomes less pronounced. This is to be expected since in the special case of $\FD = v\trans D v$, the $P_3$ model already contains $\FD$. 

\begin{figure}
	\def\localpath{Images/Fibreend/}
	\centering
	\externaltikz{DiffusionLimit}{\input{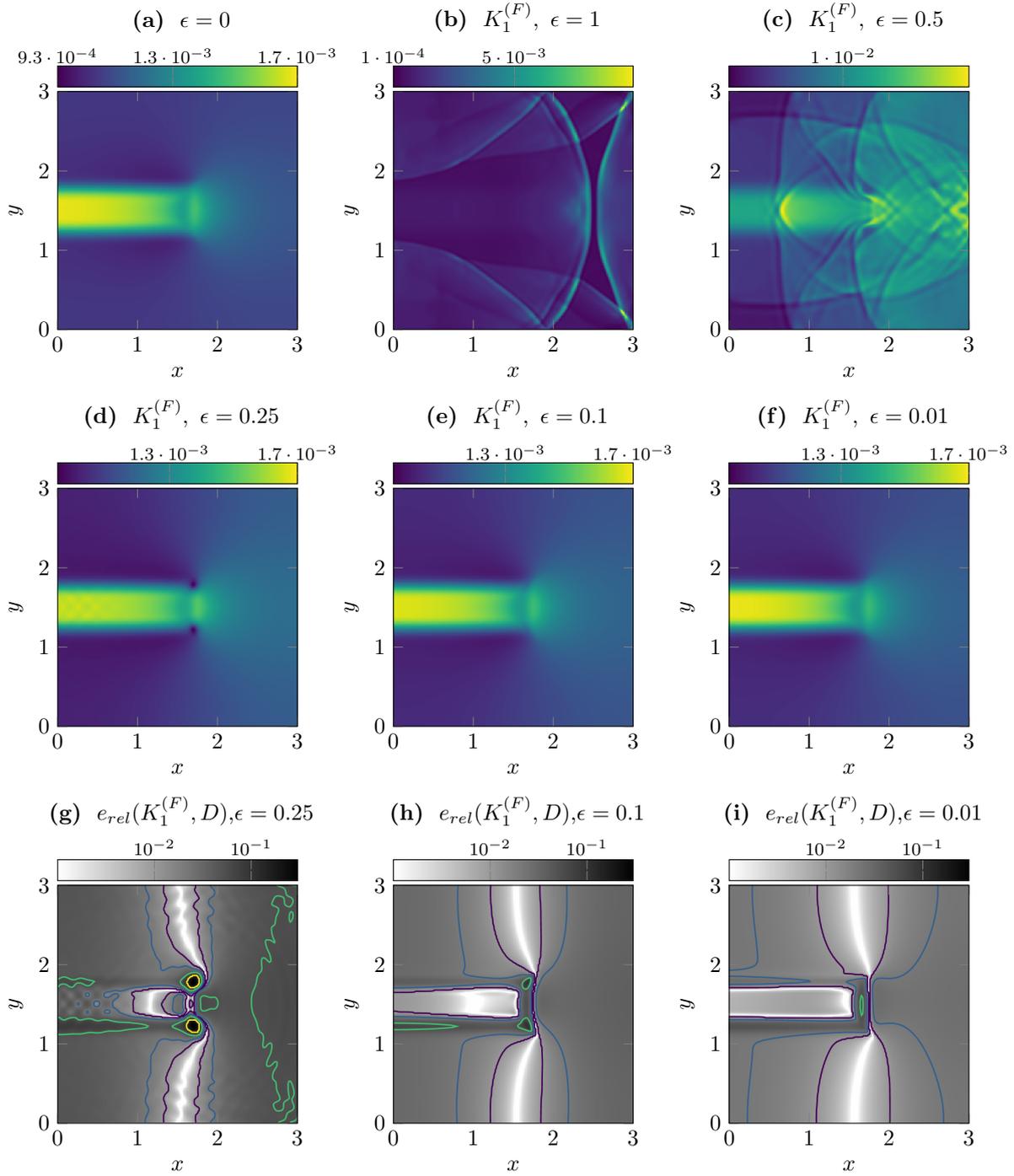}}
	\settikzlabel{fig:diffusionlimitD}\label{diff-approx}
	\settikzlabel{fig:diffusionlimitK1}\label{K1-eps1}
	\settikzlabel{fig:diffusionlimitK05}\label{K1-eps05}
	\settikzlabel{fig:diffusionlimitK025}\label{K1-eps025}
	\settikzlabel{fig:diffusionlimitK01}\label{K1-eps01}
	\settikzlabel{fig:diffusionlimitK001}\label{K1-eps001}
	\settikzlabel{fig:diffusionlimitKD025}\label{KD-eps025}
	\settikzlabel{fig:diffusionlimitKD01}\label{KD-eps01}
	\settikzlabel{fig:diffusionlimitKD001}\label{KD-eps001}
	\caption{Comparison between the $K_1^{(F)}$-model and the diffusion approximation as $\epsilon$ approaches $0$. (\ref{diff-approx}): the diffusion approximation. (\ref{K1-eps1})-(\ref{K1-eps001}): $K_1^{(F)}$ at $\epsilon = 1,0.5,0.25,0.1,0.01$, respectively. (\ref{KD-eps025})-(\ref{KD-eps001}): the relative difference between the diffusion and the Kershaw model $\relerr{K_1}{D}$ for $\epsilon = 0.25,0.1,0.01$, respectively. Differences are plotted on a logarithmic scale. Contours are drawn for $0.1$ (yellow), $0.05$ (green), $0.02$ (blue), $0.01$ (purple).}
	\label{fig:diffusionlimit}
\end{figure}

\begin{figure}
	\def\localpath{Images/Fibreend/}
	\externaltikz{P5}{\input{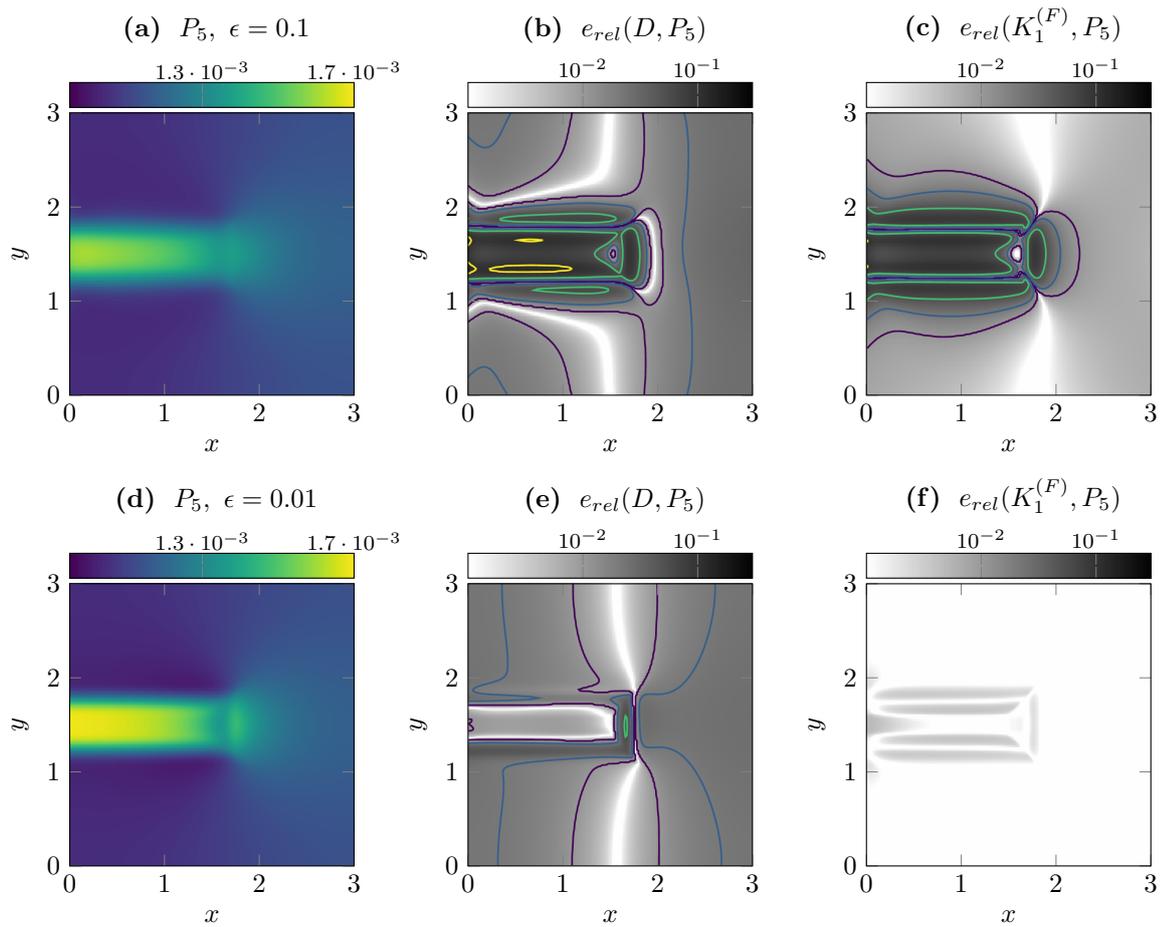}}
	\settikzlabel{fig:P5solution01}\label{P5-eps01}
	\settikzlabel{fig:P5diffD01}\label{DP5-eps01}
	\settikzlabel{fig:P5diffK01}\label{K1P5-eps01}
	\settikzlabel{fig:P5solution001}\label{P5-eps001}
	\settikzlabel{fig:P5diffD001}\label{DP5-eps001}
	\settikzlabel{fig:P5diffK001}\label{K1P5-eps001}
	\caption{The $P_5$ solution 
	and relative difference to $K_1^{(F)}$ and diffusion approximation. Upper row: $\epsilon =0.1$, lower row: $\epsilon=0.01$. Differences are plotted on a logarithmic scale. Contours are drawn for $0.1$ (yellow), $0.05$ (green), $0.02$ (blue), $0.01$ (purple).}
	\label{fig:P5}
\end{figure}

\begin{figure}
	\def\localpath{Images/Fibreend/}
	\externaltikz{ComparePN}{\input{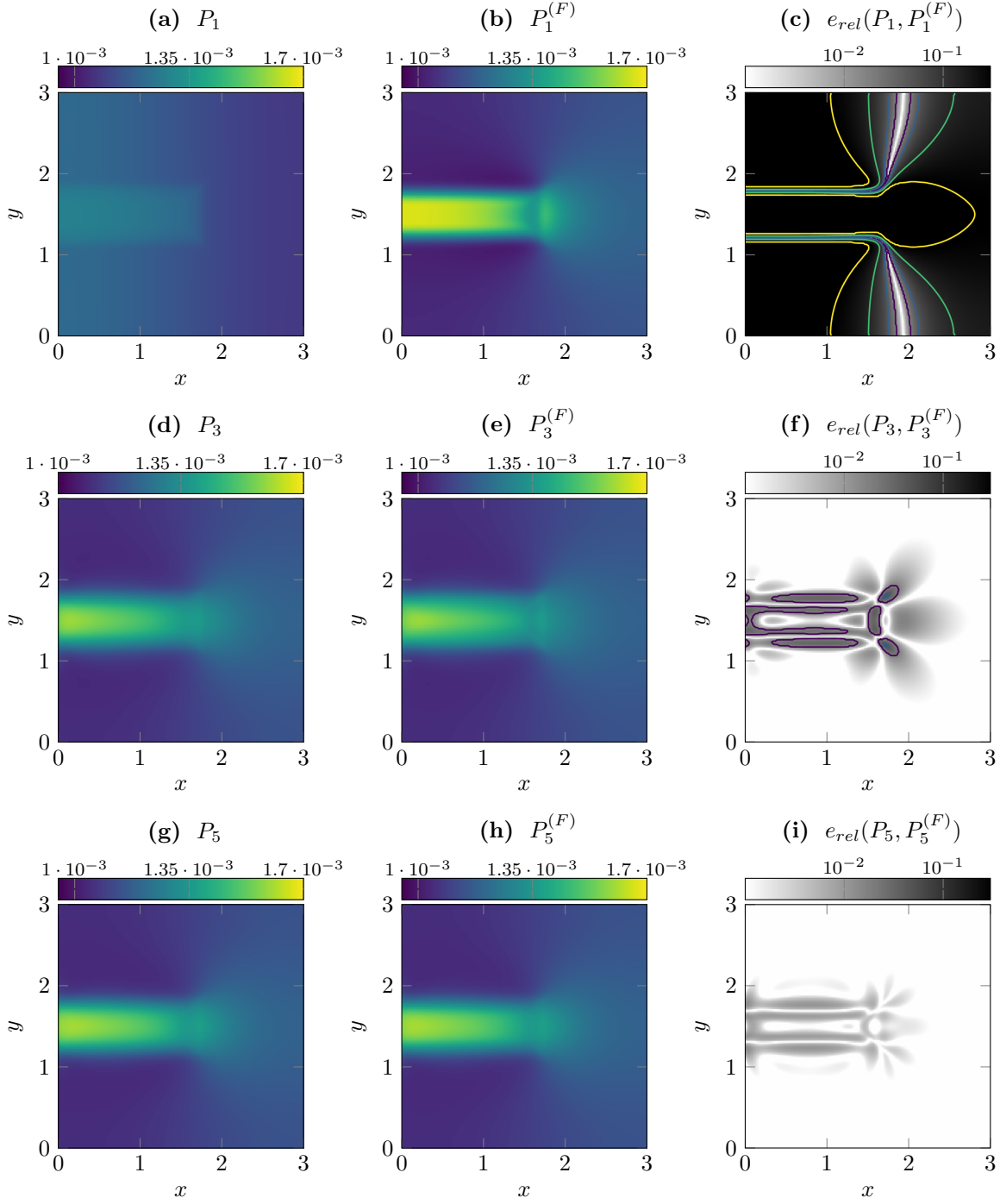}}
	\settikzlabel{fig:ComparePN_P1}
	\settikzlabel{fig:ComparePN_P1F}
	\settikzlabel{fig:ComparePN_P1diff}
	\settikzlabel{fig:ComparePN_P3}
	\settikzlabel{fig:ComparePN_P3F}
	\settikzlabel{fig:ComparePN_P3diff}
	\settikzlabel{fig:ComparePN_P5}
	\settikzlabel{fig:ComparePN_P5F}
	\settikzlabel{fig:ComparePN_P5diff}
	\caption{Comparison between standard and modified $P_N$ models for $N=1$ (upper row), $N=3$ (middle), and $N=5$ (lower row). Color code of contours is the same as in Figures \ref{fig:diffusionlimit} and \ref{fig:P5}.}
	\label{fig:ComparePN}
\end{figure}

\subsubsection{2D Brain slice}
In this numerical experiment we take water diffusion tensors $D_W$ from a DTI scan of the human brain\footnote{Provided by Carsten Wolters (Institute for Biomagnetism and Biosignal Analysis, WWU M\"unster).}. The tensor field is visualized in \figref{fig:Brain_fibres}.  We use the characteristic length 
estimate $CL(D_W)$ to obtain the volume fractions via \eqref{estimated-Q}, which are shown in \figref{fig:BrainFA}. Additionally the main diffusion direction, i.e the largest eigenvector of $D_W$, is shown in \figref{fig:BrainRGB} as a four-channel color-coded image.  
The initial tumor mass, marked by the white square in \figref{fig:BrainFA}, is concentrated in a square of length $5mm$
\begin{align*}
f(t=0,x,v) = \frac{1}{4\pi} 
\begin{cases}
1 & x \in [98.5, 103.5]\times[158.5,163.5]\\
10^{-4} & \mbox{else}
\end{cases}
\end{align*}
at the center of the spatial domain $\Omega_X = [50, 150] \times [110, 210]$, which is indicated by the red square in \figref{fig:BrainFA}. All other physical parameters are listed in \tabref{tab:parameters}. Note that the values we use here are quite far from the actual measured parameters used in \cite{EHKS14,EHS}. Therein the characteristic numbers are $St \approx 0.03$ and $Kn \approx 8 \times 10^{-8}$ and so $R \approx 10^{4}, \eta \approx 1000$, leading to little diffusion and a very pronounced migration along fibers. Due to a stiff right-hand-side and a restrictive $CFL$-condition the numerical method for the moment models introduces too much artificial diffusion in this regime to provide meaningful results. 
With the current set of parameters the characteristic behavior of glioma cells can still be observed, i.e predominant movement and concentration along white matter tracts. At $\epsilon \approx 0.3$, while the diffusion approximation and moment models are structurally similar, there is a significant difference of up to $20\%$ between them. However, a first-order moment approximation seems to be accurate enough since the difference to the third-order $P_3^{(F)}$ is mostly below $2\%$.

{
	\renewcommand{\arraystretch}{1.2}
	\begin{table}
		\begin{tabular}{l@{\hspace{5pt}}lr@{.\hspace{0pt}}l@{$\times$\hspace{0pt}}lcl}
			\multicolumn{2}{l}{Parameter}   &\multicolumn{3}{l}{Value}&                  & Description \\
			\hline 
			T           && $1$&$5768$&  $10^{ 7}$  & $s$              & time span = half a year \\
			c           && $2$&$1$   &  $10^{-4}$  & $ \frac{mm}{s}$  & cell speed \\
			$\lambda_0$ && $1$&$0$   &  $10^{-5}$  & $\frac{1}{s}$    & cell-state independent part of turning rate\\
			$\lambda_1$ && $2$&$5$   &  $10^{-4}$  & $\frac{1}{s}$    & cell-state dependent part of turning rate\\
			$k^+$       && $1$&$0$   &  $10^{-5}$  & $\frac{1}{s} $   & attachment rate of cells to ECM\\
			$k^-$       && $1$&$0$   &  $10^{-5}$  & $\frac{1}{s} $   & detachment rate of cells to ECM\\	
			\hline		
			$\epsilon$&$=St$&$3$&$02$  &  $10^{-1}$  &                  & Strouhal number\\
			$Kn         $&&$6$&$34$  &  $10^{-3}$  &                  & Knudsen number\\
	$R$&$= \frac{St^2}{Kn}$&$1$&$44$  &  $10^{1}$  &                  & \\
	        $\eta$&$= \frac{\lambda_1}{\lambda_0}$&$2$&$5$& $10^{1}$  && Ratio of turning rate coefficients\\ 
		\end{tabular}
		\caption{The parameters and the resulting characteristic numbers used in the 2D brain slice simulation.}
			\label{tab:parameters}
	\end{table}
}

\begin{figure}
	\def\localpath{Images/Brain/}
	\settikzlabel{fig:BrainFA}\label{Q-CL}
	\settikzlabel{fig:BrainRGB}\label{RGB}
	\withfiguresize{1.8205\figurewidth}{1.5\figurewidth}{\externaltikz{Brainfibres}{\input{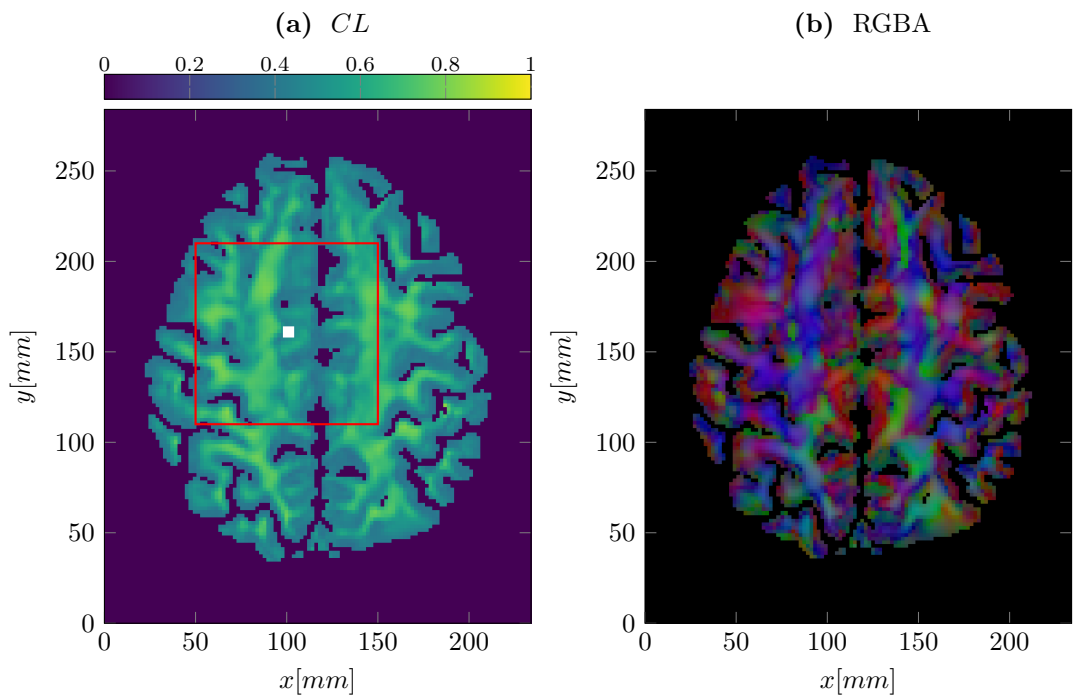}}}
	\caption{Estimated volume fraction $\VF=CL$ via characteristic length (subfigure (\ref{Q-CL})) and an RGBA coded image of the main axis of $D_W$ (sufigure (\ref{RGB})). The color channels RGB encode the $x,y,z$ components of the eigenvector corresponding to the largest eigenvalue of $D_W$ respectively, while the alpha channel is scaled with $CL$. The red square indicates the computational domain, while the white square marks the initial cell distribution.}
	\label{fig:Brain_fibres}
\end{figure}

\begin{figure}
	\def\localpath{Images/Brain/}
	\settikzlabel{fig:BrainPartCL_D}\label{CL-D}
	\settikzlabel{fig:BrainPartCL_P1}\label{CL-P1}
		\settikzlabel{fig:BrainPartCL_K1}\label{CL-K1}
	\settikzlabel{fig:BrainPartCL_P3}\label{CL-P3}
		\settikzlabel{fig:BrainPartCL_D-P3}\label{CL-D-P3}
	\settikzlabel{fig:BrainPartCL_P1-P3}\label{CL-P1-P3}
	\withfiguresize{\figurewidth}{\figurewidth}{\externaltikz{BrainPartCL}{\input{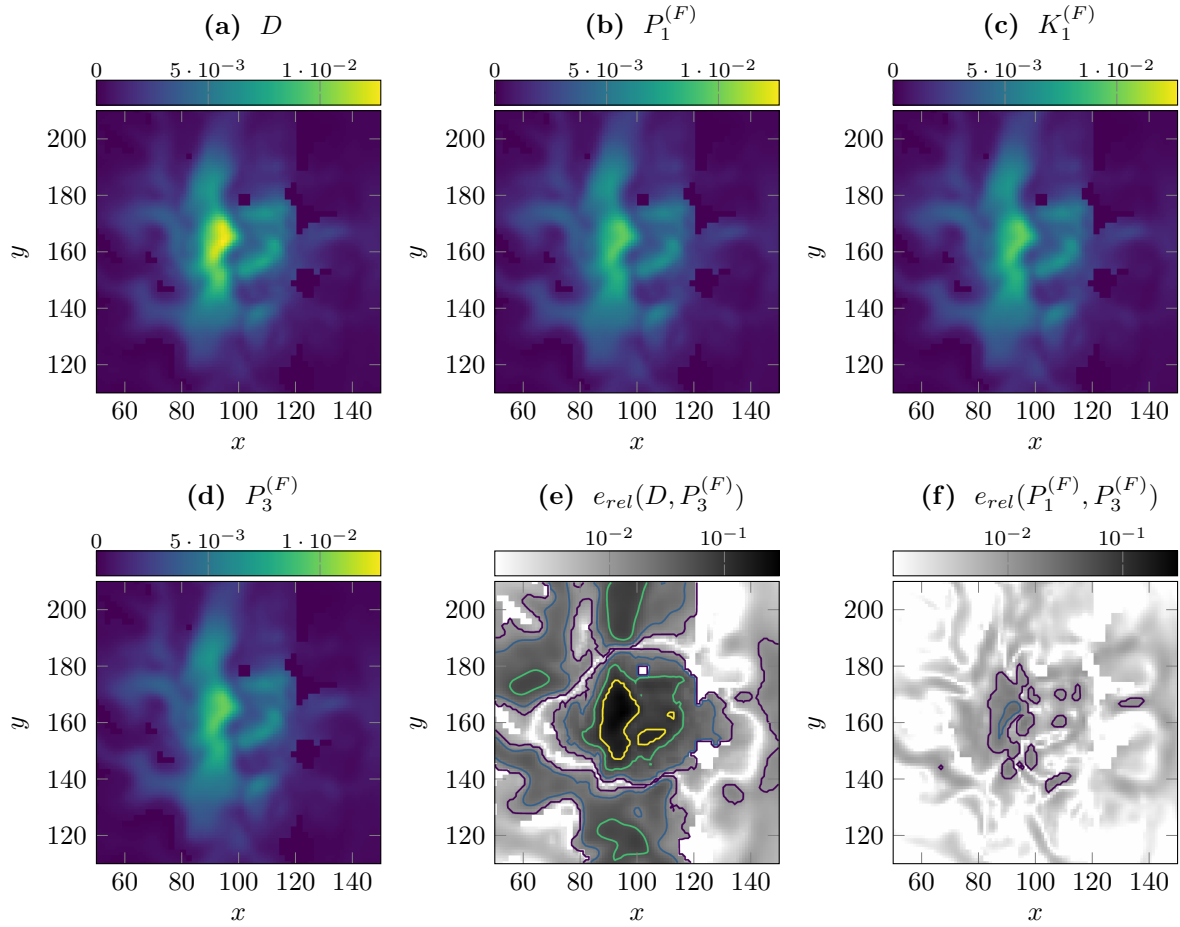}}}
	\caption{Cell distribution at end time $t = T$. (\ref{CL-D}): the diffusion approximation $D$, and 
	(\ref{CL-P1})-(\ref{CL-P3}): moment models $P_1^{(F)}$,$K_1^{(F)}$,$P_3^{(F)}$, respectively. (\ref{CL-D-P3})-(\ref{CL-P1-P3}): relative differences. Color code of contours is the same as in Figures \ref{fig:diffusionlimit} and \ref{fig:P5}.}
	\label{fig:BrainPartCL}
\end{figure}

\section{Concluding remarks}
\label{sec:Remarks}
We investigated the use of first and higher-order moment closures in comparison to the diffusion approximation in the context of glioma migration in the human brain, characterized by a macroscopic setting involving a haptotaxis-like term. The latter was obtained due to considering a supplementary kinetic 'activity' variable modeling the subcellular level dynamics of receptor binding to the surrounding tissue. In the considered examples the moment models converge as expected to the correct diffusion limit, while being more accurate in situations further from the diffusion limit. Using the modified models that explicitly contain the equilibrium distribution in their ansatz functions leads to improved results for lower moment-orders, while the difference between models vanishes as the moment-order gets larger. 
There is a range of parameters where the diffusion approximation is not accurate enough but a low-order moment model would be sufficient. This could be of relevance for the estimation of the actual tumor extent for the envisaged application. However, we could not compare the methods for parameters that are meaningful in glioma invasion as the numerical scheme for the moment models produces too much artificial diffusion in this regime. Further work will be devoted to develop an asymptotic-preserving scheme. 

\subsection*{Acknowledgment}  
We would like to thank Christian Engwer from WWU M\"unster for patiently answering all questions concerning the implementation in DUNE. 
\appendix

\section{Second-order realizability-preserving scheme}
\label{app:SecondOrderIMEX}

Throughout this Appendix we mean by 'cell' the numerical discretization element and not the biological 
entity.

\subsection{The scheme}
We consider the general one-dimensional hyperbolic system 
\begin{align}
\label{eq:GeneralHyperbolicSystem1D}
\dt\moments+\dx\Flux\left(\moments\right) = \Source\left(\moments\right),
\end{align}
which is in our case the moment system \eqref{eq:MomentSystem}. A second-order realizability-preserving scheme can be obtained by using the operator splitting approach
\begin{alignat}{2}
	\label{eq:SplittedSystema}
	\dt\moments&+\dx\Flux\left(\moments\right) &&= 0, \\
	\label{eq:SplittedSystemb}
	\dt\moments& &&= \Source\left(\moments\right),
\end{alignat}
with Strang splitting and realizability-preserving second-order schemes for the two subproblems. Note that a generalization to a regular grid in two or three dimensions is straightforward.

\subsubsection{Flux system}

Let us first consider the non-stiff part \eqref{eq:SplittedSystema}. This can be solved using standard methods, which will be summarized in the following.

The standard finite-volume scheme in semidiscrete form for \eqref{eq:SplittedSystema} looks like
\begin{align}
\label{eq:FV1}
\dt \momentscellmean{\cellind} = \numericalFlux(\momentspv{\cellind+\frac12}^+,\momentspv{\cellind-\frac12}^-)-\numericalFlux(\momentspv{\cellind+\frac12}^+,\momentspv{\cellind-\frac12}^-),
\end{align}
where $\momentscellmean{\cellind}$ is the average of $\moments$ over cell $\cellind$ and $\numericalFlux$ is a numerical flux function. We use the global Lax-Friedrichs flux
\begin{align}
\label{eq:globalLF}
\numericalFlux(\moments[1], \moments[2]) = \dfrac{1}{2} \left( \Flux_3(\moments[1]) + \Flux_3(\moments[2]) - \viscosityconstant ( \moments[2] - 
\moments[1]) \right).
\end{align}
The numerical viscosity constant $\viscosityconstant$ is taken as the global estimate of the 
absolute value of the largest eigenvalue of the Jacobian
$\Flux'$. In our case, the viscosity constant can be set to $\viscosityconstant = \frac{1}{\epsilon}$ because for the moment systems used here it is known that the largest eigenvalue is bounded in absolute value by the cell speed \cite{Schneider2015a,Schneider2016}.

The value $\momentspv{\cellind+\frac12}$ is the evaluation of a limited linear reconstruction of $\moments$ at the cell interface
\begin{align*}
\momentslocal{\cellind}(x) &= \momentscellmean{\cellind}+\momentslocal{\cellind}'\left(x-x_\cellind\right),\\
\momentslocal{\cellind}' &= \frac{1}{\dxstepsize}S\left(\momentscellmean{\cellind+1}-\momentscellmean{\cellind},\momentscellmean{\cellind}-\momentscellmean{\cellind-1})\right),
\end{align*}
applied component-wise.
As the scheme is of order two overall, it is sufficient to use a second-order central WENO\cite{Shu2003,Dumbser2008} function $S(\cdot) = W_2(\cdot)$ 
\begin{align*}
W_2(a_1,a_2) &= \frac{w(a_1)a_1 + w(a_2)a_2}{w(a_1) + w(a_2)},\\
w(a) &= (\theta + \dxstepsize a)^{-z}
\end{align*} as limiter. 
We set $\theta = 10^{-6}, z = 2$. To avoid spurious oscillations, the reconstruction has to be performed in characteristic variables \cite{Toro2009,Olbrant2012,Chidyagwai2017}. To ensure the realizability-preserving property, we additionally use the realizability limiter derived in \cite{Olbrant2012,Chidyagwai2017,Schneider2016}. If we discretize \eqref{eq:FV1} with a second-order SSP scheme, e.g. Heun's method or the general $\RKstages$ stage SSP ERK${}_2$ \cite{Ketcheson2008}, a realizability-preserving second-order scheme is obtained if reconstruction and limiting are performed in every stage of the RK method.

\subsubsection{Source system}
Again writing down the finite-volume form of \eqref{eq:SplittedSystemb} yields
\begin{align*}
\dt \momentscellmean{\cellind} = \cellmean{\Source\left(\momentslocallimited{\cellind}\right)} = \Source\left(\momentscellmean{\cellind}\right)+\mathcal{O}(\dxstepsize^2).
\end{align*}
It is thus sufficient to solve the system
\begin{align}
\label{eq:ODE}
\dt \momentscellmean{\cellind} = \Source\left(\momentscellmean{\cellind}\right),
\end{align}
which is purely an ODE in every cell. For simplicity, in the following we will neglect the spatial index and the cell mean.

It can be shown, that the discontinuous-Galerkin scheme \cite{Bauer1995,Gellrich2017} applied to \eqref{eq:ODE} is unconditionally,i.e. without any time-step restriction, realizability-preserving for even polynomial orders. 

Consider the time-cell $[\timepoint{\timeind-1},\timepoint{\timeind}]$ and define $\timepoint{\timeind-\frac12} = \frac12\left(\timepoint{\timeind}-\timepoint{\timeind-1}\right)$. We consider the weak formulation of \eqref{eq:ODE}
\begin{align*}
\int_{\timepoint{\timeind-1}}^{\timepoint{\timeind}} \dt\moments\Testfunction~d\timevar -\int_{\timepoint{\timeind-1}}^{\timepoint{\timeind}} \Source\left(\moments\right)\Testfunction~d\timevar +\Testfunction(\timepoint{\timeind-1}^+)\moments(\timepoint{\timeind-1}^+)= \Testfunction(\timepoint{\timeind-1}^+)\moments(\timepoint{\timeind-1}^-),
\end{align*}
where $\moments(\timepoint{\timeind-1}^-) = \lim_{\timevar\uparrow\timepoint{\timeind-1}} \moments(\timevar)$ denotes the old solution (e.g. initial condition). This enforces continuity in a weak sense, since $\moments(\timepoint{\timeind-1}^-)\neq \moments(\timepoint{\timeind-1}^+)= \lim_{\timevar\downarrow\timepoint{\timeind-1}} \moments(\timevar)$ is possible. Transforming everything to the reference interval $[-1,1]$ and using the nodal basis
\begin{align*}
\Testfunction[0] = \frac{\timevarref^2}{2} - \frac\timevarref{2},\qquad
\Testfunction[1] = 1 - \timevarref^2,\qquad 
\Testfunction[2] = \frac{\timevarref^2}{2} + \frac\timevarref{2},
\end{align*}
yields the weak formulation
\begin{align*}
\int_{-1}^{1} \dt\momentsprojected\Testfunction[i]~d\timevarref -\frac{\dtstepsize}{2}\int_{-1}^{1} \Source\left(\momentsprojected\right)\Testfunction[i]~d\timevarref +\delta_{i0}\moments(\timepoint{\timeind-1}^+)= \Testfunction[i](-1)\moments(\timepoint{\timeind-1}^-) = \delta_{i0}\moments(\timepoint{\timeind-1}^-),
\end{align*}
$i=0,\ldots,2$, where 
\begin{align*}
\left.\moments(\timevar)\right|_{[\timepoint{\timeind-1},\timepoint{\timeind}]} = \moments(\timepoint{\timeind-1}^+)\Testfunction[0](\timevar)+\moments(\timepoint{\timeind-\frac12})\Testfunction[1](\timevar)+\moments(\timepoint{\timeind}^-)\Testfunction[2](\timevar).
\end{align*}

This is a, possibly non-linear, system for the internal values $\moments(\timepoint{\timeind-1}^+)$, $\moments(\timepoint{\timeind-\frac12})$ and $\moments(\timepoint{\timeind}^-)$, which can be solved using standard techniques. Using $\moments(\timepoint{\timeind}^-)$ we can then proceed with our next step. Note that this method is stiffly A-stable and of order $5$ but using polynomials of degree $1$, which gives order $3$, gives a timestep restriction which is only slightly better than the one obtained from explicit SSP schemes.

\subsection{Implementation details}
The source code for the numerical simulations heavily builds upon DUNE and DUNE PDELab \cite{dune-web-page}, an extensive C++ numerics framework that provides useful functionality for discretizing PDEs, e.g. interfacing with grids, parallelization, function spaces and much more. On the space and time discretized level all linear algebra operations arising from the moment system are performed with the Eigen 3 library \cite{eigenweb}. 

\section{Hyperbolicity of the Kershaw moment system}
\label{app:HyperbolicityKershaw}
Under some mild assumptions on $\FD$ the Kershaw moment system \eqref{eq:KershawP} is hyperbolic.  This is the claim of Theorem \ref{only-theorem}, which we prove below.

\begin{remark}
	The original Kershaw closure with $\FD = \frac{1}{4\pi}$ and $\intV{\FD vv\trans} = I$ also loses diagonalizability for all $\normal{q}$ with $|\normal{q}| = 1$.  
\end{remark}
\begin{proof} (of Theorem \ref{only-theorem})
	The Jacobian of the fluxes in the first-order moment system is 
	\begin{align*}
		J(F\cdot n) &= 
		\begin{pmatrix}
		0 &\quad  n\trans \\
		\pfrac{P n}{\rho}  &\quad  \pfrac{P n}{q}
		\end{pmatrix} \\
		&= \begin{pmatrix}
		0 & \quad n\trans \\
		\normal{P}n - \pfrac{\normal{P}n}{\normal{q}} \normal{q}  & \quad \pfrac{\normal{P}n}{\normal{q}}
		\end{pmatrix}.
	\end{align*}
	With $\normal{P} = \normal{P}^{(K)}_{\FD}$ from \eqref{eq:KershawP} and writing $\normal{P}_{eq} = \intV{\FD vv\trans}$ we have
	\begin{align*}
	\pfrac{\normal{P}n}{\normal{q}} &= -2(\normal{P}_{eq}n)\normal{q}\trans + I (\normal{q}\trans n) + \normal{q}n\trans,\\
	\pfrac{\normal{P}n}{\normal{q}}\normal{q} &= -2 (\normal{P}_{eq}n)\normal{q}\trans\normal{q} + 2 (\normal{q}\trans n) \normal{q}.
	\end{align*}
	Inserting these into the expression for the Jacobian gives 
	\begin{align*}
	J(F \cdot n)
	&= \begin{pmatrix}
	0 & n\trans \\
	(1+|\normal{q}|^2)(\normal{P}_{eq}n) - (\normal{q}\trans n)\normal{q} &\quad -2(\normal{P}_{eq}n)\normal{q}\trans + I(\normal{q}\trans n) + \normal{q}n\trans
	\end{pmatrix},\\
	&= \begin{pmatrix}
	0 & n\trans \\
	(1+|\normal{q}|^2)(\normal{P}_{eq}n) - |\normal{q}|^2(q\trans[*] n)q\trans[*] &\quad |\normal{q}| \left( -2(\normal{P}_{eq}n)q\trans[*] + I(q\trans[*] n) + q^* n\trans \right)
	\end{pmatrix},
	\end{align*}
	where $q^* = \frac{\normal{q}}{|\normal{q}|}$ is the free-streaming first moment.
	Define the rotation matrix $\hat{R}$ that rotates $q^*$ onto the first unit vector $e_1$
	\begin{align*}
	\hat{R} q^* = e_1,
	\end{align*}
	and a compatible extension
	\begin{align*}
	R := \begin{pmatrix}
	1 & 0 \\ 0 & \hat{R}
	\end{pmatrix}
	\end{align*}
	to the full Jacobian. Under the similarity transform $R$ the Jacobian becomes
	\begin{align*}
	RJ(F\cdot n)R\trans = \begin{pmatrix}
	0 & n\trans \hat{R}\trans \\
	(1+|\normal{q}|^2)\hat{R}(\normal{P}_{eq}n) - |\normal{q}|^2 (e_1\trans \hat{R} n)e_1 &\quad |\normal{q}| \left( -2(\hat{R}\normal{P}_{eq}n)e_1\trans + I(e_1\trans \hat{R} n) + e_1 n\trans \hat{R}\trans \right)
	\end{pmatrix}
	\end{align*}
	It suffices to show hyperbolicity for arbitrary $n_j$ that form a basis of $\R^3$. Therefore we choose 
	$n_j$ such that $\hat{R}n_j = e_j, j = 1,2,3$. 
	The Jacobians become
	\begin{align*}
	RJ(F\cdot n_j)R\trans = \begin{pmatrix}
	0 & e_j\trans \\
	(1+|\normal{q}|^2)Se_j - |\normal{q}|^2 \delta_{1j} e_1 &\quad |\normal{q}| \left( -2S e_je_1\trans + I\delta_{1j} + e_1e_j\trans\right)
	\end{pmatrix},
	\end{align*}
	where $S$ is the similarity transform of $\normal{P}_{eq}$: 
	\begin{align*}
	S := \hat{R} \normal{P}_{eq} \hat{R}\trans.
	\end{align*}
	For $j=1$, we have
	\begin{align*}
	RJ(F\cdot n_1)R\trans = \begin{pmatrix}
	0 &\quad 1 &\quad 0 &\quad 0 \\
	(1+|\normal{q}|^2)S_{11} - |\normal{q}|^2 &\quad |\normal{q}|(-2S_{11} + 2) &\quad 0 &\quad 0 \\
	(1+|\normal{q}|^2)S_{21}                &\quad |\normal{q}|(-2S_{21})     &\quad |\normal{q}| &\quad 0 \\
	(1+|\normal{q}|^2)S_{31}                &\quad |\normal{q}|(-2S_{31})     &\quad 0 &\quad |\normal{q}| 
	\end{pmatrix},
	\end{align*}
	with characteristic polynomial 
	\begin{align*}
	det(RJ(F\cdot n_1)R\trans - \lambda I) &= (|\normal{q}| - \lambda)^2 \left[ \lambda^2 + \lambda ( -2 + 2 S_{11}|\normal{q}|) + (|\normal{q}|^2 - (1+|\normal{q}|^2)S_{11})\right]
	\end{align*}
	and eigenvalues 
	\begin{align*}
	\lambda_{1,2} &= |\normal{q}|, \\
	\lambda_{3,4} &= (1 - S_{11}|\normal{q}|) \pm \sqrt{g(|\normal{q}|,S_{11})}.
	\end{align*}
	We need that all eigenvalues are real, which is the case if the term under the square root
	\begin{align*}
	g(|\normal{q}|,S_{11}) &= S_{11}^2 |\normal{q}|^2 + S_{11}(|\normal{q}|-1)^2 + (1 - |\normal{q}|^2)
	\end{align*}
	is greater than or equal to zero. Since 
	\begin{align*}
	S_{11} = e_1\trans \hat{R} \intV{\FD vv\trans} \hat{R}\trans e_1 = \intV{\FD (e_1\trans \hat{R} v)^2} > 0,
	\end{align*}
	by the non-flatness assumption on $\FD$, 
	and $|\normal{q}| \in [0,1]$, we have indeed 
	\begin{align*}
	g(|\normal{q}|,S_{11}) > 0.
	\end{align*}
	We still need to check if the Jacobian is diagonalizable, i.e. there are four linear independent eigenvalues. We see immediately that the dimension of the kernel of $RJ(F\cdot n_1)R\trans - |\normal{q}| I$ is two. Thus, two independent eigenvectors exist for the eigenvalue $\lambda_{1,2} = |\normal{q}|$. Since $g > 0$, the eigenvalues $\lambda_3 \neq \lambda_4$ are distinct and therefore the Jacobian is diagonalizable. To see why we need the non-flatness assumption consider $g = 0$, which can only happen if both
	$S_{11} = 0$ and $|\normal{q}| = 1$.  In this case all four eigenvalues are equal to one and the Jacobian simplifies to 
	\begin{align*}
	RJ(F\cdot n_1)R\trans
	\begin{pmatrix}
	0 & 1 & 0 & 0 \\
	-1 & 2 & 0 & 0 \\
	(1+\normal{q}^2)S_{21} & |\normal{q}|(-2S_{21}) & 1 & 0 \\
	(1+\normal{q}^2)S_{31} & |\normal{q}|(-2S_{31})& 0 & 1
	\end{pmatrix},
	\end{align*}
	which clearly is not diagonalizable. 
	
	For $j=2$ the Jacobian is 
	\begin{align*}
	RJ(F\cdot n_2)R\trans = \begin{pmatrix}
	0 & 0 & 1 & 0 \\
	(1+|\normal{q}|^2)S_{12} & |\normal{q}|(-2S_{12}) & |\normal{q}| & 0 \\
	(1+|\normal{q}|^2)S_{22} & |\normal{q}|(-2S_{22}) & 0 & 0 \\
	(1+|\normal{q}|^2)S_{32} & |\normal{q}|(-2S_{32}) & 0 & 0 
	\end{pmatrix},
	\end{align*}
	with characteristic polynomial
	\begin{align*}
	det(RJ(F\cdot n_2)R\trans - \lambda I) &= \lambda^2 \left[ \lambda^2 + 2|\normal{q}|S_{12} \lambda + S_{22}(|\normal{q}|^2 - 1)\right]
	\end{align*}
	and eigenvalues 
	\begin{align*}
	\lambda_{1,2} &= 0, \\
	\lambda_{3,4} &= -|\normal{q}|S_{12} \pm \sqrt{h},
	\end{align*}
	where 
	\begin{align*}
	h = |\normal{q}|^2 S_{12}^2 + S_{22} (1 - |\normal{q}|^2) \geq 0
	\end{align*}
	is non-negative by the same arguments as before. Note that there is always a $\normal{q}$ for which the system is no longer hyperbolic: Choose $\normal{q}$ as a unit vector along one eigenvector of $\normal{P}_{eq}$. In that case $S$ is diagonal and $S_{12} = 0$. All eigenvalues are equal to zero and the Jacobian is
	\begin{align*}
	RJ(F\cdot n_2)R\trans = 
	\begin{pmatrix}
	0 & 0 & 1 & 0 \\
	0 & 0 & 1 & 0 \\
	2S_{22} & -2S_{22} & 0 & 0 \\
	0 & 0 & 0 & 0
	\end{pmatrix}.
	\end{align*}
	The case $j=3$ is completely analogous to $j=2$.
\end{proof}


\bibliographystyle{plain}
\bibliography{bibliography}

\begin{thebibliography}{10}

\bibitem{dune-web-page}
{DUNE Web page}, 2011.

\bibitem{Schneider2015a}
G.W. Alldredge and F.~Schneider.
\newblock {A realizability-preserving discontinuous Galerkin scheme for
  entropy-based moment closures for linear kinetic equations in one space
  dimension}.
\newblock {\em Journal of Computational Physics}, 295:665--684, aug 2015.

\bibitem{AniPenSam91}
A.M. Anile, S.~Pennisi, and M.~Sammartino.
\newblock {A thermodynamical approach to Eddington factors}.
\newblock {\em Journal of Mathematical Physics}, 32(2):544, 1991.

\bibitem{BSS84}
C.~Bardos, R.~Santos, and R.~Sentis.
\newblock Diffusion approximation and computation of the critical size.
\newblock {\em Transactions of the american mathematical society},
  284(2):617--649, 1984.

\bibitem{Bauer1995}
R.~Bauer.
\newblock {\em {Discontinuous Galerkin Methods for ordinary differential
  equations}}.
\newblock Master thesis, University of Colorado at Denver, 1995.

\bibitem{BBNS10a}
N.~Bellomo, A.~Bellouquid, J.~Nieto, and J.~Soler.
\newblock Complexity and mathematical tools toward the modeling of
  multicellular growing systems.
\newblock {\em Math. Comput. Model.}, 51:441--451, 2010.

\bibitem{BKP17}
R.~Borsche, A.~Klar, and TN~Ha Pham.
\newblock Nonlinear flux-limited models for chemotaxis on networks.
\newblock {\em Networks \& Heterogeneous Media}, 12(3), 2017.

\bibitem{boettger-etal-12}
K.~B\"ottger, H.~Hatzikirou, A.~Chauviere, and A.~Deutsch.
\newblock Investigation of the migration/proliferation dichotomy and its impact
  on avascular glioma invasion.
\newblock {\em Mathematical Modelling of Natural Phenomena}, 7:105--135, 2012.

\bibitem{BruHol01}
T.A. Brunner and J.P. Holloway.
\newblock {One-dimensional Riemann solvers and the maximum entropy closure}.
\newblock {\em Journal of Quantitative Spectroscopy and Radiative Transfer},
  69(5):543--566, jun 2001.

\bibitem{CMPS04}
F.~Chalub, P.~Markowich, B.~Perthame, and C.~Schmeiser.
\newblock Kinetic models for chemotaxis and their drift-diffusion limits.
\newblock {\em Monatsh. Math.}, 142:123--141, 2004.

\bibitem{Chidyagwai2017}
P.~Chidyagwai, M.~Frank, F.~Schneider, and B.~Seibold.
\newblock {A Comparative Study of Limiting Strategies in Discontinuous Galerkin
  Schemes for the M1 Model of Radiation Transport}.
\newblock (2):1--24, 2017.

\bibitem{claes2007}
A.~Claes, A.~Idema, and P.~Wesseling.
\newblock Diffuse glioma growth: a guerilla war.
\newblock {\em Acta Neuropathol.}, 114:443--458, 2007.

\bibitem{colombo-etal}
M.C. Colombo, C.~Giverso, E.~Faggiano, C.~Boffano, F.~Acerbi, and P.~Ciarletta.
\newblock Towards the personalized treatment of glioblastoma: Integrating
  patient-specific clinical data in a continuous mechanical model.
\newblock {\em PLoS ONE}, 10(7):e0132887, 2015.

\bibitem{coons}
S.~Coons.
\newblock Anatomy and growth patterns of diffuse gliomas.
\newblock In M.~Berger and C.~Wilson, editors, {\em The gliomas}, pages
  210--225. W.B. Saunders Company, Philadelphia, 1999.

\bibitem{Goudon2005}
J.F. Coulombel, F.~Golse, and T.~Goudon.
\newblock {Diffusion approximation and entropy-based moment closure for kinetic
  equations}.
\newblock {\em Asymptotic Analysis}, pages 1--34, 2005.

\bibitem{d'abaco-kaye07}
G.~D'Abaco and A.~Kaye.
\newblock Integrins: Molecular determinants of glioma invasion.
\newblock {\em J. of Clinical Neurosci.}, 14:1041--1048, 2007.

\bibitem{daumas-duportetal}
C.~Daumas-Duport, P.~Varlet, M.L. Tucker, F.~Beuvon, P.~Cervera, and J.P.
  Chodkiewicz.
\newblock Oligodendrogliomas. part i: Patterns of growth, histological
  diagnosis, clinical and imaging correlations: A study of 153 cases.
\newblock {\em Journal of Neuro-Oncology}, 34:37--59, 1997.

\bibitem{Dumbser2008}
M.~Dumbser, C.~Enaux, and E.F. Toro.
\newblock {Finite volume schemes of very high order of accuracy for stiff
  hyperbolic balance laws}.
\newblock {\em Journal of Computational Physics}, 227(8):3971--4001, apr 2008.

\bibitem{EHKS14}
C.~Engwer, T.~Hillen, M.~Knappitsch, and C.~Surulescu.
\newblock Glioma follow white matter tracts: a multiscale {DTI}-based model.
\newblock {\em J. Math Biol.}, 71:551--582, 2015.

\bibitem{EHS}
C.~Engwer, A.~Hunt, and C.~Surulescu.
\newblock Effective equations for anisotropic glioma spread with proliferation:
  a multiscale approach.
\newblock {\em IMA J. Mathematical Medicine and Biology}, 33:435--459, 2016.

\bibitem{EKS16}
C.~Engwer, M.~Knappitsch, and C.~Surulescu.
\newblock A multiscale model for glioma spread including cell-tissue
  interactions and proliferation.
\newblock {\em J. Math. Engrg.}, 13:443--460, 2016.

\bibitem{Gellrich2017}
K.~Gellrich.
\newblock {\em {Stability Preserving Runge-Kutta and Discontinuous-Galerkin
  Methods for Hyperbolic Conservation Laws}}.
\newblock Bachelor thesis, TU Kaiserslautern, 2017.

\bibitem{gerstneretal2010}
E.R. Gerstner, P.-J. Chen, P.Y. Wen, R.K. Jain, T.T. Batchelor, and
  G.~Sorensen.
\newblock Infiltrative patterns of glioblastoma spread detected via diffusion
  {MRI} after treatment with cediranib.
\newblock {\em {N}euro-{O}ncology}, 12(5):466--472, 2010.

\bibitem{giese-etal96}
A.~Giese, L.~Kluwe, Meissner H., Michael E., and M.~Westphal.
\newblock Migration of human glioma cells on myelin.
\newblock {\em Neurosurgery}, 38:755--764, 1996.

\bibitem{giesewestphal1996}
A.~Giese and M.~Westphal.
\newblock Glioma invasion in the central nervous system.
\newblock {\em Neurosurgery}, 39:235--252, 1996.

\bibitem{eigenweb}
G.~Guennebaud, B.~Jacob, et~al.
\newblock Eigen v3.
\newblock http://eigen.tuxfamily.org, 2010.

\bibitem{hanahan2011}
D.~Hanahan and R.A. Weinberg.
\newblock Hallmarks of cancer: the next generation.
\newblock {\em Cell}, 144(5):646--674, 2011.

\bibitem{hatzikirou-etal-12}
H.~Hatzikirou, D.~Basanta, M.~Simon, K.~Schaller, and A.~Deutsch.
\newblock ‘go or grow’: the key to the emergence of invasion in tumour
  progression?
\newblock {\em Math Med Biol}, 29:49--65, 2012.

\bibitem{hillen2002hyperbolic}
T.~Hillen.
\newblock Hyperbolic models for chemosensitive movement.
\newblock {\em Mathematical Models and Methods in Applied Sciences},
  12(07):1007--1034, 2002.

\bibitem{hillenM5}
T.~Hillen.
\newblock {$M^5$} mesoscopic and macroscopic models for mesenchymal motion.
\newblock {\em J. Math. Biol. 53}, pages 585--616, 2006.

\bibitem{HS17}
A.~Hunt and C.~Surulescu.
\newblock A multiscale modeling approach to glioma invasion with therapy.
\newblock {\em Vietnam J. Math.}, 45:221--240, 2017.

\bibitem{Jbabdi}
A.~Jbabdi, E.~Mandonnet, H.~Duffau, L.~Capelle, K.R. Swanson,
  M.~Pelegrini-Issac, R.~Guillevin, and H.~Benali.
\newblock Simulation of anisotropic growth of low-grade gliomas using diffusion
  tensor imaging.
\newblock {\em Mang.\ Res.\ Med.}, 54:616--624, 2005.

\bibitem{Ker76}
D.S. Kershaw.
\newblock {Flux Limiting Nature's Own Way: A New Method for Numerical Solution
  of the Transport Equation}.
\newblock jul 1976.

\bibitem{Ketcheson2008}
D.I. Ketcheson.
\newblock {Highly efficient strong stability-preserving Runge-Kutta methods
  with low-storage implementations}.
\newblock {\em SIAM Journal on Scientific Computing}, 30(4):2113--2136, 2008.

\bibitem{kim-roh-2013}
Y.~Kim and S.~Roh.
\newblock A hybrid model for cell proliferation and migration in glioblastoma.
\newblock {\em Discr. Cont. Dyn. Syst. B}, 18:969--1015, 2013.

\bibitem{Klar2014}
A.~Klar, F.~Schneider, and O.~Tse.
\newblock {Approximate models for stochastic dynamic systems with velocities on
  the sphere and associated fokker-planck equations}.
\newblock {\em Kinetic and Related Models}, 7(3), 2014.

\bibitem{Konukoglu2010}
E.~Konukoglu, O.~Clatz, P.Y. Bondiau, H.~Delignette, and N.~Ayache.
\newblock Extrapolation glioma invasion margin in brain magnetic resonance
  images: Suggesting new irradiation margins.
\newblock {\em Medical Image Analysis}, 14:111--125, 2010.

\bibitem{LarKel75}
E.W. Larsen and J.B. Keller.
\newblock {Asymptotic Solution of Neutron Transport Problems for Small Mean
  Free Path}.
\newblock {\em J. Math. Phys.}, 15:75, 1974.

\bibitem{LeBihan2001DTI}
D.~Le~Bihan, J.-F. Mangin, C.~Poupon, C.A. Clark, S.~Pappata, N.~Molko, and
  H.~Chabriat.
\newblock Diffusion tensor imaging: concepts and applications.
\newblock {\em Journal of magnetic resonance imaging}, 13(4):534--546, 2001.

\bibitem{Lev84}
C.D. Levermore.
\newblock {Relating Eddington factors to flux limiters}.
\newblock {\em Journal of Quantitative Spectroscopy and Radiative Transfer},
  31(2):149--160, 1984.

\bibitem{Olbrant2012}
E.~Olbrant, C.D. Hauck, and M.~Frank.
\newblock {A realizability-preserving discontinuous Galerkin method for the M1
  model of radiative transfer}.
\newblock {\em Journal of Computational Physics}, 231(17):5612--5639, jul 2012.

\bibitem{PH13}
K.~Painter and T.~Hillen.
\newblock Mathematical modelling of glioma growth: the use of diffusion tensor
  imaging ({DTI}) data to predict the anisotropic pathways of cancer invasion.
\newblock {\em J. Theor. Biol.}, 323:25--39, 2013.

\bibitem{Ritter2016}
J.~Ritter, A.~Klar, and F.~Schneider.
\newblock {Partial-moment minimum-entropy models for kinetic chemotaxis
  equations in one and two dimensions}.
\newblock jan 2016.

\bibitem{Schneider2016aCodeIMEX}
F.~Schneider.
\newblock {First-order quarter- and mixed-moment realizability theory and
  Kershaw closures for a Fokker-Planck equation in two space dimensions: Code},
  2016.

\bibitem{Schneider2016}
F.~Schneider.
\newblock {\em {Moment models in radiation transport equations}}.
\newblock Dr. Hut Verlag, 2016.

\bibitem{Schneider2014}
F.~Schneider, G.W. Alldredge, M.~Frank, and A.~Klar.
\newblock {Higher Order Mixed-Moment Approximations for the Fokker--Planck
  Equation in One Space Dimension}.
\newblock {\em SIAM Journal on Applied Mathematics}, 74(4):1087--1114, jul
  2014.

\bibitem{Schneider2015c}
F.~Schneider, J.~Kall, and A.~Roth.
\newblock {First-order quarter- and mixed-moment realizability theory and
  Kershaw closures for a Fokker-Planck equation in two space dimensions}.
\newblock sep 2015.

\bibitem{Shu2003}
C.-W. Shu.
\newblock {High-order finite difference and finite volume WENO schemes and
  discontinuous Galerkin methods for CFD}.
\newblock {\em International Journal of Computational Fluid Dynamics}, 2003.

\bibitem{tanakaetal2009}
M.L. Tanaka, W.~Debinski, and I.K. Puri.
\newblock Hybrid mathematical model of glioma progression.
\newblock {\em Cell {P}roliferation}, 42:637--646, 2009.

\bibitem{Toro2009}
E.F. Toro.
\newblock {\em {Riemann Solvers and Numerical Methods for Fluid Dynamics}}.
\newblock Springer London, Limited, 2009.

\bibitem{wang09}
C.H. Wang, J.K. Rockhill, M.~Mrugala, D.L. Peacock, A.~Lai, K.~Jusenius, J.M.
  Wardlaw, T.~Cloughesy, A.M. Spence, R.~Rockne, E.C. Alvord~Jr., and K.R.
  Swanson.
\newblock Prognostic significance of growth kinetics in newly diagnosed
  glioblastomas revealed by combining serial imaging with a novel
  biomathematical model.
\newblock {\em Cancer Res.}, 69:9133--9140, 2009.

\bibitem{wrenschetal2002}
M.~Wrensch, Y.~Minn, T.~Chew, M.~Bondy, and M.S. Berger.
\newblock Epidemiology of primary brain tumors: Current concepts and review of
  the literature.
\newblock {\em Neuro-Oncology}, 4(4):278--299, 2002.

\end{thebibliography}

\end{document}